\theoremstyle{plain} % イタリック体
\newtheorem{theorem}{Theorem}[section] % 見出しはスモールキャップ
\newtheorem{lemma}[theorem]{Lemma}
\newtheorem{corollary}[theorem]{Corollary}
\newtheorem{proposition}[theorem]{Proposition}
\theoremstyle{definition} % ローマン体に変更
\newtheorem{remark}[theorem]{Remark}
\newtheorem{example}[theorem]{Example}
\newtheorem{problem}[theorem]{Problem}
\newcommand{\affine}{\mathbb{C}}
\newcommand{\norm}[1]{\left|\!\left|#1\right|\!\right|_{\infty}}
\newcommand{\moduli}{\mathcal{M}}
\newcommand{\Widim}{\mathrm{Widim}}
\newcommand{\Diam}{\mathrm{Diam}}
\newcommand{\Alb}{\mathrm{Alb}}
\begin{document}

\title[Moduli space of Brody curves]{Moduli space of Brody curves, energy and mean dimension} % 論文タイトル []内は柱用

\author[Masaki Tsukamoto]{Masaki Tsukamoto$^*$} % 第一著者名 []内は柱用

%%%%%%%%%%%%%%%%%%% 脚注 %%%%%%%%%%%%%%%%%%%%%%%%%%%%%%
\subjclass[2000]{32H30}
% \subjclass[2000]コマンドが効かない(amsart.clsのバージョンが古い)場合は
% 以下を利用すること
%\renewcommand{\thefootnote}{\fnsymbol{footnote}}
%\footnote[0]{2000\textit{ Mathematics Subjet Classification}.
%Primary 00; Secondary 00.}

\keywords{moduli space of Brody curves, mean dimension, mean energy, the Nevanlinna theory}

\thanks{$^*$Supported by Grant-in-Aid for JSPS Fellows (19$\cdot$1530) from Japan Society for the
Promotion of Science}

%%%%%%%%%%%% 著者所属 %%%%%%%%%%%%%

%\address{ % 第一著者
%Mathematical Institute \endgraf % 適当な箇所で改行
%Tohoku University \endgraf
%Sendai 980-8578 \endgraf
%Japan
%}
%\email{}

%%%%%%%%%%%%%%%%%%%%%%%%%%%%%%%%%%%%%%%%%%%%%%%%%%%%%%%

\maketitle

\begin{abstract}
We study the mean dimension of the moduli space of Brody curves.
We introduce the notion of ``mean energy" and show that this can be used  
to estimate the mean dimension.
\end{abstract}

\section{Main results}
\subsection{Moduli space of Brody curves}
M. Gromov introduced a remarkable notion of \textit{mean dimension} in \cite{Gromov} 
(see also Lindenstrauss-Weiss \cite{Lindenstrauss-Weiss} and Lindenstrauss \cite{Lindenstrauss}).
In this paper we study the mean dimension of the moduli space of Brody curves.
We introduce the notion of \textit{mean energy} of Brody curves and study the relation 
between mean energy and mean dimension.
Mean energy is, in some sense, an infinite dimensional version of characteristic number, and
our approach is an attempt to attack an infinite dimensional index problem.

Let $\affine P^N$ be the complex projective space and $[z_0: z_1:\cdots :z_N]$ be the 
homogeneous coordinate in $\affine P^N$.
We define the Fubini-Study metric form $\omega_{FS}$ on $\affine P^N$ by 
\begin{equation}\label{def:Fubini-Study}
\omega_{FS} := \frac{\sqrt{-1}}{2 \pi} \partial \bar{\partial}
                \log \left( 1 + \sum_{i = 1}^N |z_i|^2 \right)
\quad \text{on $\{[1:z_1:\cdots:z_N]\}$}.
\end{equation}
This 2-form $\omega_{FS}$ smoothly extends over $\affine P^N$ and defines the Fubini-Study metric. 
This is normalized so that
\[ \int_{\affine P^1} \omega_{FS} = 1
 \quad \text{for $ \, \affine P^1 := \{ \, [z_0: z_1: 0: \cdots : 0] \in \affine P^N \}$ }. \]
Let $z = x + y\sqrt{-1}$ be the natural coordinate in the complex plane $\affine$, and 
let $f:\affine \to \affine P^N$ be a holomorphic map.
We define the pointwise norm $|df|(z)\geq 0$ of the differential $df$ by
\begin{equation}\label{eq: definition of |df|}
 f^* \omega_{FS} = |df|^2 dx dy,
\end{equation}
i.e., for a holomorphic curve $f = [1:f_1:\cdots :f_N]$ 
with holomorphic functions $f_1, \cdots , f_N$
\begin{equation*}
 |df|^2(z) = 2 |df(\partial /\partial z)|^2 = 
 \frac{1}{4 \pi} \Delta \log \left(1 +|f_1|^2+ \cdots +|f_N|^2 \right) \quad 
(\Delta := \frac{\partial^2}{\partial x^2} + \frac{\partial^2}{\partial y^2} ). 
\end{equation*}
We call a holomorphic map $f:\affine \to \affine P^N$ a Brody curve if it satisfies
$|df|\leq 1$ (cf. Brody \cite{Brody}).
Let $\moduli (\affine P^N)$ be the moduli space of Brody curves in $\affine P^N$:
\[
\moduli (\affine P^N) := \{ f: \affine \to \affine P^N | \text{ $f$ is holomorphic and } |df|(z) \leq 1
\text{ for all z $\in \affine$} \}. 
\]
We consider the compact-open topology on $\moduli (\affine P^N)$ 
(in other words, the topology of compact uniform convergence).
This topology is metrizable and $\moduli (\affine P^N)$ becomes a compact topological space.

The Lie group $\affine$ naturally acts on $\moduli (\affine P^N)$:
\[ \affine \times \moduli (\affine P^N) \longrightarrow \moduli (\affine P^N) , \quad (a, f(z)) \mapsto f(z+a). \]
The main objects of study in this paper are \textit{$\affine$-invariant closed subsets in $\moduli (\affine P^N)$}
\footnote{In the theory of dynamical systems, the study of closed invariant sets is very fundamental. 
$\affine$-invariant closed subsets in $\moduli (\affine P^N)$ are their analogue.}.
The following are basic examples:
\begin{example}
Let $X \subset \affine P^N$ be an algebraic set in $\affine P^N$ (not necessarily smooth),
and let $\moduli (X)$ be the moduli space of Brody curves in $X$:
\begin{equation}\label{definiton of M(X)}
\moduli (X) := \{f\in \moduli (\affine P^N)|\, f(\affine ) \subset X \}.
\end{equation}
Since $X$ is closed in $\affine P^N$, this $\moduli (X)$ is closed in $\moduli (\affine P^N)$ and
obviously $\affine$-invariant.
\end{example}
\begin{example}\label{example of quasi-projective cases}
Let $V \subset \affine P^N$ be a hypersurface in $\affine P^N$, i.e. the zero set of a homogeneous polynomial.
Let $\moduli (\affine P^N \setminus  V)$ be the closure of the set of Brody curves in 
$\affine P^N \setminus V$:
\[ \moduli (\affine P^N \setminus  V) := \overline{\{ f\in \moduli (\affine P^N)|\, 
f(\affine ) \subset \affine P^N\setminus V \}}, \]
where the overline means the closure with respect to the compact-open topology.
This becomes a $\affine$-invariant closed subset.
\end{example}

%%%%%%%%%%%%%%%%%%%%%%%%%%%%%%%%%%%%%%%%%%%%%%%%%%%%%%%%%%%%%%%%%%%%%%%%%%%%%%%%%%%%%%%%%%%%%%%%%%%%%%%%%%%%%%%%%%%%%
%%%%%%%%%%%%%%%%%%%%%%%%%%%%%%%%%%%%%%%%%%%%%%%%%%%%%%%%%%%%%%%%%%%%%%%%%%%%%%%%%%%%%%%%%%%%%%%%%%%%%%%%%%%%%%%%%%%%%

\subsection{Mean dimension and mean energy}
We introduce the notion of \textit{mean energy} in this subsection.
This is the key notion of this paper.
For a holomorphic curve $f:\affine \to \affine P^N$, let $T(r, f)$ be the Shimizu-Ahlfors characteristic function:
\[ T(r, f) := \int_1^r \frac{dt}{t} \int_{|z| \leq t} |df|^2(z) \, dxdy \quad \text{for all $r\geq 1$}. \]
We define the \textit{mean energy} $e(f)$ by 
\begin{equation}\label{def:mean energy}
e(f) := \limsup_{r\to \infty} \frac{2}{\pi r^2} T(r, f).  
\end{equation}
If $f$ is a Brody curve, then we have $T(r, f) \leq \pi r^2/2$. Hence 
\[ 0 \leq e(f) \leq 1 \quad \text{for all $f \in \moduli (\affine P^N)$}. \]
It is easy to see that $e(f)$ is a $\affine$-invariant functional on $\moduli (\affine P^N)$:
\[ e(f(z)) = e(f(z+a)) \quad \text{for any $f(z) \in \moduli (\affine P^N)$ and $a \in \affine$}. \]
For an algebraic set $X\subset \affine P^N$, we define $e(X)$ by 
\[ e(X) := \sup_{f\in \moduli (X)} e(f) .\]
Here $\moduli (X)$ is the moduli space of Brody curves in $X$ defined in (\ref{definiton of M(X)}).
Obviously $e(X)$ satisfies
\[ 0 \leq e(X) \leq 1.\]
\begin{remark}
In \cite{T1}, we introduced and studied the notion of \textit{packing density} of Brody curves.
For a Brody curve $f:\affine \to \affine P^N$, we define the \textit{packing density} $\rho (f)$ by 
\[ \rho (f) := \limsup_{r\to \infty}\frac{1}{\pi r^2}\int_{|z|\leq r} |df|^2(z) \, dxdy.\]
For an algebraic set $X \subset \affine P^N$, we define $\rho (X)$ by 
\[ \rho (X) := \sup_{f\in \moduli (X)} \rho (f) .\]
$\rho (f)$ and $\rho (X)$ obviously satisfies 
\[ 0 \leq \rho (f) \leq 1 \quad \text{and}\quad 0\leq \rho (X) \leq 1 .\]
The integral of $|df|^2(z) dxdy$ is usually called ``energy". Hence $\rho (f)$ measures the 
packing density of the energy of $f$ over the complex plane.

It is easy to see that 
\[ e(f) \leq \rho (f) \quad \text{and hence}\quad e(X) \leq \rho (X) .\]
The crucial point of these notions is the fact that \textit{they are non-trivial invariants}.
In \cite{T1}, the following is proved:
\[ 0 < \rho (\affine P^N) < 1,\]
i.e., the value of $\rho (\affine P^N)$ is non-trivial
\footnote{For the case of $\affine P^1$, we have an effective upper bound (cf. \cite{T1}):
\[ \rho (\affine P^1) \leq 1 - 10^{-100} .\]}.
Hence we can see that\footnote{The upper bound $e(\affine P^N) < 1$ 
follows from $e(\affine P^N) \leq \rho (\affine P^N)$. The lower bound follows from, for example, the fact that 
\[ e(f) > 0 \quad \text{for a non-constant elliptic function $f:\affine \to \affine P^1$}.\]  } 
\begin{equation*}
 0 < e(\affine P^N) < 1 .
\end{equation*}
In particular we have
\[ e(X) \leq e(\affine P^N) < 1 \quad \text{for any algebraic set $X$ in $\affine P^N$} .\]

If $f:\affine \to \affine P^N$ extends to a holomorphic map $\tilde{f}$ from $\affine P^1 = \affine \cup \{\infty\}$ to 
$\affine P^N$, then the total energy is equal to the \textit{degree} of the map $\Tilde{f}$:
\[ \int_{\affine} |df|^2 \, dxdy = \int_{\affine P^1} \tilde{f}^* \omega_{FS} = \deg(\tilde{f}) .\]
Therefore $e(f)$ and $\rho (f)$ are ``regularized degree" of Brody curves.
\end{remark}

For an algebraic set $X\subset \affine P^N$, $\moduli (X)$ is a compact topological space 
whose topology is metrizable, and the Lie group $\affine$ acts on $\moduli (X)$.
 Then we can consider the mean dimension $\dim (\moduli (X):\affine )$ 
(cf. Gromov \cite{Gromov} and Subsection 4.1).
The mean energy $e(X)$ gives an upper bound for $\dim (\moduli (X):\affine )$:
\begin{theorem}\label{theorem: upper bound for dim(M(X):C)}
\[ \dim(\moduli (X) :\affine ) \leq 4 e(X) \dim_{\affine}X .\]
Here $\dim_{\affine}X$ denotes the complex dimension of $X$.
For the definition of complex dimension of algebraic sets, see Grauert-Remmert \cite[Chapter 5]{Grauert}.
\end{theorem}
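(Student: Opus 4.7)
The strategy is to bound the mean dimension via an $\epsilon$-widim estimate on restrictions of $\moduli(X)$ to large disks. By the standard reformulation of mean dimension for the $\affine$-action (to be recalled in Subsection~4.1), with $d_r(f,g):=\sup_{|z|\le r}\mathrm{dist}_{FS}(f(z),g(z))$, the theorem reduces to showing that for every fixed $\epsilon>0$,
\[
\limsup_{r\to \infty}\frac{\Widim_\epsilon(\moduli(X),d_r)}{\pi r^2}\;\leq\; 4\,e(X)\,\dim_{\affine}X,
\]
from which the conclusion follows on letting $\epsilon\to 0$.

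To bound the widim, I would build, for each large $r$, a continuous $\epsilon$-embedding of $\moduli(X)$ into a finite-dimensional polyhedron whose dimension is controlled by Nevanlinna intersection data. Concretely, choose $n:=\dim_{\affine}X$ hypersurfaces $H_1,\dots,H_n\subset \affine P^N$ in sufficiently general position so that their trace on $X$ locally parametrizes $X$, and fix a small buffer $\delta>0$. To each $f\in\moduli(X)$, assign, for each $i=1,\dots,n$, the unordered multiset of intersection points of $f$ with $H_i$ inside $\bar D_{r+\delta}$, together with a bounded amount of transverse jet data at each intersection needed to pin down the local branch of $f$. Via the First Main Theorem of Nevanlinna theory applied under the Brody bound $|df|\le 1$, the total parameter count is controlled, in the log-averaged sense intrinsic to Nevanlinna theory, by the Shimizu--Ahlfors characteristic $T(r+\delta,f)$; a careful accounting yields a bound of the form $8n\,T(r+\delta,f)+o(r^2)$ on the real dimension of the target polyhedron.

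The main technical obstacle is verifying that this assignment is an $\epsilon$-embedding, i.e.\ that two Brody curves with identical Nevanlinna data on $\bar D_{r+\delta}$ are forced to lie within $d_r$-distance $\epsilon$. The Brody bound $|df|\le 1$ provides equicontinuity, which combined with a unique-continuation argument for holomorphic maps passing through sufficiently many generic hypersurfaces with matching transverse jets enforces this closeness once $\delta$ is chosen small in terms of $\epsilon$ and the hypersurfaces. This step is a parametrized refinement of the local rigidity ideas underlying my earlier analysis in \cite{T1}, and making it uniform over all of $\moduli(X)$ is where the bulk of the technical work lies.

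Finally, dividing the parameter-count bound $8n\,T(r+\delta,f)+o(r^2)$ by $\pi r^2$, using the definition $e(f)=\limsup 2T(r,f)/(\pi r^2)$, and noting $(r+\delta)^2/r^2\to 1$ for fixed $\delta$, we obtain
\[
\limsup_{r\to\infty}\frac{\Widim_\epsilon(\moduli(X),d_r)}{\pi r^2}\;\le\; 4n\,e(f)\;\le\;4\,e(X)\,\dim_{\affine}X,
\]
uniformly in $\epsilon$. Letting $\epsilon\to 0$ completes the proof.
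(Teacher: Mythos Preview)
Your approach is quite different from the paper's, and it contains a genuine gap in the final step.

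The paper's proof is short and indirect. It proves a \emph{discretization lemma} (Lemma~\ref{lemma: discretization 1}): if $f,g:\affine\to\affine P^N$ satisfy $e(f)+e(g)<1/|\affine/\Lambda|$ and $f|_\Lambda=g|_\Lambda$, then $f\equiv g$. This is a two-line application of the First Main Theorem to $(f_i-g_i)^{-1}$. One then chooses any lattice $\Lambda$ with $2e(X)<1/|\affine/\Lambda|$, so that the evaluation map $\moduli(X)\to X^\Lambda$, $f\mapsto f|_\Lambda$, is a $\Lambda$-equivariant topological embedding. Hence $\dim(\moduli(X):\Lambda)\le\dim(X^\Lambda:\Lambda)=2\dim_\affine X$, and dividing by $|\affine/\Lambda|$ and letting $|\affine/\Lambda|^{-1}\downarrow 2e(X)$ gives the result. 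No widim computation on $\moduli(X)$ is ever performed directly.

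The gap in your argument is the passage from an $f$-dependent parameter count to a bound on $\Widim_\varepsilon(\moduli(X),d_r)$. The latter is a single number: to bound it you must produce \emph{one} target polyhedron into which \emph{all} of $\moduli(X)$ $\varepsilon$-embeds. Its dimension is therefore governed not by $T(r+\delta,f)$ for an individual $f$, but by $\sup_{f\in\moduli(X)}n(r+\delta,H_i,f)$ (indeed by the unintegrated counting function, not $T$). Your displayed inequality $\limsup_r \Widim_\varepsilon(\moduli(X),d_r)/(\pi r^2)\le 4n\,e(f)$ has a free variable $f$ on the right that does not appear on the left; as written it is meaningless. What you would actually need is
\[
\limsup_{r\to\infty}\;\sup_{f\in\moduli(X)}\frac{2T(r,f)}{\pi r^2}\;\le\;e(X)=\sup_{f\in\moduli(X)}\limsup_{r\to\infty}\frac{2T(r,f)}{\pi r^2},
\]
i.e.\ an exchange of $\limsup_r$ and $\sup_f$. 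This is not automatic and you have not argued it. The paper's discretization approach sidesteps this entirely: once $2e(X)<1/|\affine/\Lambda|$ is fixed, every pair $f,g\in\moduli(X)$ satisfies $e(f)+e(g)\le 2e(X)<1/|\affine/\Lambda|$, so injectivity of the evaluation map holds uniformly with no further analysis. Even setting aside the acknowledged difficulty of your $\varepsilon$-embedding step, this uniformity issue is the essential obstacle in your plan.
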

This result is a start point of the study of the relation between mean dimension and mean energy.

%%%%%%%%%%%%%%%%%%%%%%%%%%%%%%%%%%%%%%%%%%%%%%%%%%%%%%%%%%%%%%%%%%%%%%%%%%%%%%%%%%%%%%%%%%%%%%%%%%%%%%%%%%%%%%%%%%%%%%%%
%%%%%%%%%%%%%%%%%%%%%%%%%%%%%%%%%%%%%%%%%%%%%%%%%%%%%%%%%%%%%%%%%%%%%%%%%%%%%%%%%%%%%%%%%%%%%%%%%%%%%%%%%%%%%%%%%%%%%%%%

\subsection{The case of $\affine P^N$}
Applying Theorem \ref{theorem: upper bound for dim(M(X):C)} to the case of $X = \affine P^N$, 
we get an upper bound:
\begin{equation}\label{upper bound for dim(M(CP^N):C)}
 \dim (\moduli(\affine P^N) :\affine ) \leq 4 e(\affine P^N) N < 4N .
\end{equation}
Here we have used the fact $e(\affine P^N) < 1$.
On the other hand, from Gromov \cite[p. 328, 0.6.2]{Gromov}, 
we have $\dim(\moduli (\affine P^N) :\affine ) >0$. 
Actually we can prove
\begin{theorem}\label{theorem: lower bound for the case of projective space}
There exists a positive constant $C$ independent of $N$ such that 
\[ \dim (\moduli (\affine P^N) :\affine ) \geq C \cdot N.\]
Therefore
\[ C\cdot N \leq \dim (\moduli (\affine P^N):\affine ) \leq 4 e(\affine P^N) N < 4N .\]
\end{theorem}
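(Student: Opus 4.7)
The strategy is to produce an infinite-dimensional family inside $\moduli(\affine P^N)$ parametrized by independent local data at the points of a fixed lattice $\Lambda \subset \affine$, with each lattice site carrying on the order of $N$ independent real parameters, and then to compute the mean dimension of this equivariant family from below. In this way the target-space dimension $N$ enters linearly through the number of perturbation directions available at each site.

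First I would fix a reference Brody curve $f_0 = [1 : f_0^{(1)} : \cdots : f_0^{(N)}]$ in $\affine P^N$ whose homogeneous coordinates $f_0^{(j)}$ are meromorphic functions with prescribed simple poles at shifted copies $\Lambda + v_j$ of a lattice $\Lambda = L\mathbb{Z}[\sqrt{-1}]$, where $L$ is a constant to be chosen independently of $N$ and the shifts $v_1, \ldots, v_N \in \affine$ are arranged so the pole sets of distinct $f_0^{(j)}$ are pairwise disjoint. For parameter vectors $\mathbf{a} = (a_{\lambda, j})_{\lambda \in \Lambda,\, 1 \leq j \leq N}$ with each $a_{\lambda, j}$ in a small disk $D_\epsilon \subset \affine$, I would deform the pole of $f_0^{(j)}$ at $\lambda + v_j$ to the nearby position $\lambda + v_j + a_{\lambda, j}$, obtaining a family $f_{\mathbf{a}}$ depending holomorphically on $\mathbf{a}$ (for instance, via ratios of Weierstrass $\sigma$-products). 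This produces a continuous $\Lambda$-equivariant map $\mathbf{a} \mapsto f_{\mathbf{a}}$ from $P = D_\epsilon^{\Lambda \times \{1, \ldots, N\}}$ into $\moduli(\affine P^N)$, injective because $\mathbf{a}$ is recovered from the pole divisor of $f_{\mathbf{a}}$. Since $\Lambda$ is cocompact in $\affine$ with fundamental-domain volume $L^2$ and each site contributes $2N$ real parameters, a direct computation shows that $\Widim_\epsilon$ of $P$ in the dynamical metric on a disk $D_R \subset \affine$ grows like $(2N/L^2)\pi R^2$, giving $\dim(P:\affine) \geq 2N/L^2$, and hence $\dim(\moduli(\affine P^N):\affine) \geq CN$ with $C = 2/L^2 > 0$ independent of $N$.

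The main obstacle is verifying that every $f_{\mathbf{a}}$ in the family actually satisfies the Brody bound $|df_{\mathbf{a}}|(z) \leq 1$ uniformly on all of $\affine$. The pointwise norm $|df_{\mathbf{a}}|^2 = (4\pi)^{-1}\Delta\log(1 + \sum_j |f_{\mathbf{a}}^{(j)}|^2)$ is a global $L^\infty$ object even though the deformations sit at infinitely many lattice sites, so the cumulative effect of all perturbations must be controlled simultaneously. The approach would be to choose $f_0$ so that $|df_0|$ is strictly below $1$ outside a thin neighborhood of the pole lattice, and then establish a decay estimate showing that a perturbation localized at $\lambda$ affects $|df|(z)$ by an amount that falls off rapidly as $|z - \lambda|$ grows, so that only the finitely many lattice sites nearest to $z$ contribute significantly. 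With this quantitative stability in hand, the Brody condition is preserved for all $\mathbf{a} \in P$ provided $\epsilon$ is small enough and $L$ is chosen large (but still independent of $N$, because the relevant analytic balance between per-pole norm contribution and lattice density does not depend on $N$ once the pole patterns of distinct coordinates are disjoint); after this, injectivity, equivariance, and the mean dimension calculation are routine.
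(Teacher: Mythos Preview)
Your overall strategy---embedding a $\Lambda$-shift on a space carrying $2N$ real parameters per lattice site into $\moduli(\affine P^N)$---is exactly the paper's, and once such an embedding exists the lower bound $\dim(\moduli(\affine P^N):\affine)\geq 2N/|\affine/\Lambda|$ is immediate. The difference is in the construction of the family, and there your proposal has a real gap.

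The whole difficulty is the uniform Brody bound $|df_{\mathbf a}|\leq 1$ with a lattice $\Lambda$ \emph{independent of $N$}. You assert this holds because ``the relevant analytic balance between per-pole norm contribution and lattice density does not depend on $N$ once the pole patterns of distinct coordinates are disjoint,'' but that reasoning points the wrong way. With $L$ fixed, the $N$ shifts $v_1,\dots,v_N$ (read modulo $\Lambda$) must accumulate in a fundamental domain of area $L^2$; their mutual distances shrink like $L/\sqrt{N}$. So near a pole of $f_0^{(j)}$ there are many other coordinates $f_0^{(k)}$ with nearby poles, hence large values and derivatives, and the Fubini-Study numerator
\[
\sum_n |f_n'|^2 \;+\; \sum_{n<m} |f_n f_m' - f_n' f_m|^2
\]
contains $\binom{N}{2}$ cross terms whose uniform control in $N$ is not automatic. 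Disjointness of the pole sets does not help; with $L$ fixed it forces clustering. Your decay estimate concerns the \emph{perturbations} $a_{\lambda,j}$, but already the baseline $|df_0|$ is not shown to be bounded independently of $N$.

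The paper's construction sidesteps this entirely. All $N$ coordinate functions share the \emph{same} pole set $\Lambda$ (triple poles), the parameters are coefficients $a_{n\lambda}$ ranging over a fixed annulus rather than pole positions, and each coordinate carries a factor $1/\sqrt{N}$:
\[
f_a(z)=\Bigl[\,1:\tfrac{1}{\sqrt{N}}\sum_{\lambda\in\Lambda}\tfrac{a_{1\lambda}}{(z-\lambda)^3}:\cdots:\tfrac{1}{\sqrt{N}}\sum_{\lambda\in\Lambda}\tfrac{a_{N\lambda}}{(z-\lambda)^3}\,\Bigr].
\]
The $1/\sqrt{N}$ is precisely what the isometric embedding $[1:z]\mapsto[1:z/\sqrt{N}:\cdots:z/\sqrt{N}]$ of $\affine P^1$ into $\affine P^N$ suggests, and with it $\sum_n|f_n|^2$, $\sum_n|f_n'|^2$, and the cross-term sum are all bounded independently of $N$ by direct estimation (near and far from $\Lambda$ separately). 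That normalization is the missing idea in your sketch: without some explicit mechanism of this kind, the $N$-independence of the Brody bound is an assertion, not a proof.
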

\begin{remark}
Gromov gives a certain upper bound for $\dim(\moduli(\affine P^N):\mathbb{\affine})$ in 
\cite[p. 396, (c)]{Gromov}.
Unfortunately, I could not find the definition of 
the Fubini-Study metric used in \cite[p. 396, (c)]{Gromov} (the Fubini-Study metric has 
several conventions).
Therefore I could not decide whether our estimate (\ref{upper bound for dim(M(CP^N):C)}) 
is better than Gromov's estimate in \cite[p. 396, (c)]{Gromov} or not.
But Gromov referred to the paper of A. Eremenko \cite{Eremenko} there, and the argument in 
\cite[Theorem 2.5]{Eremenko} is similar to our argument in Lemma 2.1 of this paper. 
And I think that the use of mean energy (or packing density) makes the related estimates
sharper.
\end{remark}
\begin{problem}
In \cite[Section 4]{T1} we proved that 
\[ \lim_{N\to \infty} \rho(\affine P^N) = 1.\]
In the same way as in \cite[Section 4]{T1}, we can prove
\[ \lim_{N\to \infty} e(\affine P^N) = 1 .\]
Hence it might be interesting to study the asymptotic behavior of 
$\dim(\moduli(\affine P^N):\affine)/N$ as $N\to \infty$.
\end{problem}

%%%%%%%%%%%%%%%%%%%%%%%%%%%%%%%%%%%%%%%%%%%%%%%%%%%%%%%%%%%%%%%%%%%%%%%%%%%%%%%%%%%%%%%%%%%%%%%%%%%%%%%%%%%%%%%%%%%%%
%%%%%%%%%%%%%%%%%%%%%%%%%%%%%%%%%%%%%%%%%%%%%%%%%%%%%%%%%%%%%%%%%%%%%%%%%%%%%%%%%%%%%%%%%%%%%%%%%%%%%%%%%%%%%%%%%%%%%

\subsection{Definition of $\moduli_+$ and some examples of mean dimension $= 0$}
Let $\moduli \subset \moduli (\affine P^N)$ be a $\affine$-invariant closed subset in $\moduli (\affine P^N)$.
We define $\moduli_+\subset \moduli$ as the closure of the set of Brody curves in $\moduli$ of positive mean energy:
\[ \moduli_+ := \overline{\{ f \in \moduli |\, e(f) >0\}}. \]
$\moduli_+$ is a $\affine$-invariant closed subset in $\moduli$.
We have the following general fact:
\begin{theorem} \label{theorem: dim(M:C) = dim(M_+:C)}
\[ \dim (\moduli :\affine ) = \dim (\moduli_+: \affine ) .\]
If $\moduli_+$ is empty, then we set $\dim(\moduli_+ :\mathbb{C}) = \dim (\emptyset:\affine ) :=0$.
\end{theorem}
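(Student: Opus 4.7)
The inequality $\dim(\mathcal{M}_+:\affine) \le \dim(\mathcal{M}:\affine)$ is immediate from the monotonicity of mean dimension under passage to $\affine$-invariant closed subsystems. All the substance lies in the reverse inequality.

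My first step would be to establish a strengthening of Theorem \ref{theorem: upper bound for dim(M(X):C)}: for any $\affine$-invariant closed subset $\mathcal{M}' \subset \moduli(\affine P^N)$,
\[ \dim(\mathcal{M}':\affine) \le 4N \sup_{f \in \mathcal{M}'} e(f). \]
The proof of Theorem \ref{theorem: upper bound for dim(M(X):C)} should give this with essentially no change, since it uses only the Brody inequality $|df| \le 1$ and Nevanlinna-type estimates on $T(r,f)$, rather than any feature of the algebraic set $X$. An immediate corollary is that every $\affine$-invariant closed subset of $\moduli$ disjoint from $\moduli_+$ has mean dimension zero: such a subset lies in $\{e = 0\}$ because $\{e > 0\} \subset \moduli_+$ by definition. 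This already settles the case $\moduli_+ = \emptyset$, since then $\moduli$ itself is disjoint from $\moduli_+$.

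When $\moduli_+ \ne \emptyset$, my plan is to construct, for each $\eta > 0$, a pair of $\affine$-invariant closed subsets $K_1, K_2 \subset \moduli$ with $K_1 \cup K_2 = \moduli$, $K_2 \cap \moduli_+ = \emptyset$, and $\dim(K_1:\affine) \le \dim(\moduli_+:\affine) + \eta$. The corollary forces $\dim(K_2:\affine) = 0$, and standard subadditivity of mean dimension for finite closed covers (inherited from the corresponding subadditivity of $\Widim_\epsilon$) then yields $\dim(\moduli:\affine) \le \dim(\moduli_+:\affine) + \eta$. Letting $\eta \to 0$ closes the argument.

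The hard part is the construction of such a cover, because the compact-open metric on $\moduli$ is not $\affine$-invariant and naive tubular neighbourhoods of $\moduli_+$ fail to be $\affine$-invariant. I would instead work with the continuous approximations $\psi_r(f) := (2/\pi r^2) T(r, f)$, which satisfy $|\psi_r(f(\cdot+a)) - \psi_r(f)| = O(|a|/r)$ (hence become asymptotically $\affine$-invariant as $r \to \infty$) and whose $\limsup$ in $r$ recovers $e$. Taking $K_1$ to be a closed level set of a suitably averaged $\psi_r$ for large $r$ above a small positive threshold, with $K_2$ the complementary closed set chosen so as to remain disjoint from $\moduli_+$, should give both $\affine$-invariance and the desired mean-dimension bound on $K_1$ via a careful passage to the limit $r \to \infty$ combined with the strengthened form of Theorem \ref{theorem: upper bound for dim(M(X):C)}.
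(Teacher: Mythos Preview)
Your first step---extending Theorem~\ref{theorem: upper bound for dim(M(X):C)} to an arbitrary $\affine$-invariant closed subset $\moduli' \subset \moduli(\affine P^N)$ with the bound $4N\sup_{\moduli'} e$---is correct and does dispose of the case $\moduli_+ = \emptyset$. But the cover argument for the general case has a genuine gap: you give no mechanism for the bound $\dim(K_1:\affine) \le \dim(\moduli_+:\affine) + \eta$. Your strengthened Theorem~\ref{theorem: upper bound for dim(M(X):C)} yields only $\dim(K_1:\affine) \le 4N\sup_{K_1} e$, which is useless here since $K_1 \supset \moduli_+ \supset \{e>0\}$ forces $\sup_{K_1} e = \sup_{\moduli} e$. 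There is no semicontinuity principle guaranteeing that a closed invariant neighbourhood of $\moduli_+$ has mean dimension close to that of $\moduli_+$, and level sets of $\psi_r$ do not supply one. Two secondary issues compound this: the ``standard subadditivity'' $\dim(K_1 \cup K_2:\affine) \le \dim(K_1:\affine)+\dim(K_2:\affine)$ you invoke is not in fact a standard result for mean dimension; and since $\moduli_+$ is a \emph{closure} and may well contain curves with $e=0$, a sublevel set of $\psi_r$ need not be disjoint from $\moduli_+$ at all.

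The paper avoids neighbourhoods altogether. It first proves a fibre inequality (Theorem~\ref{theorem: pre-fiber theorem}): if $\varphi:X\to Y$ is $\mathbb{Z}^k$-equivariant continuous between compact metric $\mathbb{Z}^k$-spaces and $A\subset X$ is closed invariant with $\varphi|_{X\setminus A}$ injective, then $\dim(X:\mathbb{Z}^k)\le\dim(Y:\mathbb{Z}^k)+\dim(A:\mathbb{Z}^k)$. Applying this with $X=\moduli$, $A=\moduli_+$, $Y=(\affine P^N)^\Lambda$, and $\varphi$ the discretization $g\mapsto g|_\Lambda$ (injective on $\moduli\setminus\moduli_+$ by Lemma~\ref{lemma: discretization 1}, since $e\equiv 0$ there) gives $\dim(\moduli:\Lambda)\le 2N+\dim(\moduli_+:\Lambda)$; dividing by $|\affine/\Lambda|$ and sending $|\affine/\Lambda|\to\infty$ finishes. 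The fibre inequality compares $\dim(\moduli)$ directly with $\dim(\moduli_+)$, so no thickening---and hence no control of mean dimension under thickening---is ever required.
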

If $\moduli_+$ is a finite dimensional space (in the sense of topological covering dimension),
 then we have $\dim (\moduli_+ :\affine ) =0$.
Therefore 
\begin{corollary}
If $\dim (\moduli :\affine)$ is positive, then $\moduli_+$ is an infinite dimensional space.
\end{corollary}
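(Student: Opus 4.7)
The inequality $\dim(\moduli_+ : \affine) \leq \dim(\moduli : \affine)$ is immediate from the monotonicity of mean dimension under inclusion of closed $\affine$-invariant subsets.

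For the converse, the idea is to show that curves of vanishing mean energy contribute nothing to the mean dimension. Fix $\delta > 0$ and set
\[ A_\delta := \overline{\{ f \in \moduli : e(f) \geq \delta \}} \subset \moduli_+, \]
which is closed and $\affine$-invariant. I plan to compute mean dimension via $\Widim_\epsilon$ with respect to the sup-distance $d_R$ on the disk $B_R \subset \affine$, and apply widim subadditivity to write
\[ \Widim_\epsilon(\moduli, d_R) \;\leq\; \Widim_\epsilon(A_\delta, d_R) \;+\; \Widim_\epsilon(K_R, d_R) \;+\; 1, \]
where $K_R$ is a closed subset of $\moduli$ covering $\moduli \setminus A_\delta$ on which $T(R,f)/R^2 \leq \delta$ uniformly in $f$. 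Dividing by $\pi R^2$, the first term is controlled by $\dim(A_\delta : \affine) \leq \dim(\moduli_+ : \affine)$, and the second, via the pointwise-energy-to-widim estimate underlying the proof of Theorem~\ref{theorem: upper bound for dim(M(X):C)}, is bounded by $C \delta R^2 N$. Passing to $\limsup_R$ and then $\epsilon \to 0$ would give
\[ \dim(\moduli : \affine) \;\leq\; \dim(\moduli_+ : \affine) + C \delta N, \]
and letting $\delta \to 0$ completes the proof.

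The main technical obstacle is that $e(f) < \delta$ is only a $\limsup$ condition on $T(r,f)/r^2$, so no single radius $R$ can serve to bound $T(R,f)/R^2$ uniformly over $\moduli \setminus A_\delta$. I would handle this by exploiting the $\limsup_R$ already present in the definition of mean dimension: for each $\eta > 0$ the set $\moduli \setminus N_\eta(A_\delta)$ is compact, and continuity of $f \mapsto T(R,f)$ for each fixed $R$, combined with a standard compactness/diagonal-extraction argument, yields a sequence $R_k \to \infty$ along which the required uniform bound $\sup T(R_k, f)/R_k^2 < \delta$ holds on $\moduli \setminus N_\eta(A_\delta)$. Inserting this into the widim decomposition along the subsequence $R_k$ and sending $\eta \to 0$ jointly with $\epsilon \to 0$ then closes the argument.
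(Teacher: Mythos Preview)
Your proposal is in effect an attempt to reprove Theorem~\ref{theorem: dim(M:C) = dim(M_+:C)}, from which the Corollary follows in one line: if $\moduli_+$ has finite covering dimension then $\dim(\moduli_+:\affine)=0$, hence $\dim(\moduli:\affine)=0$. The paper's route to that theorem is quite different and much shorter. One takes the discretization map $D:\moduli\to(\affine P^N)^\Lambda$, $f\mapsto f|_\Lambda$, and observes that for any $f,g\in\moduli\setminus\moduli_+$ one has $e(f)+e(g)=0<1/|\affine/\Lambda|$, so Lemma~\ref{lemma: discretization 1} makes $D$ injective on $\moduli\setminus\moduli_+$ for \emph{every} lattice $\Lambda$, however coarse. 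Theorem~\ref{theorem: pre-fiber theorem} then gives $\dim(\moduli:\Lambda)\le\dim((\affine P^N)^\Lambda:\Lambda)+\dim(\moduli_+:\Lambda)=2N+\dim(\moduli_+:\Lambda)$, i.e.\ $|\affine/\Lambda|\,\dim(\moduli:\affine)\le 2N+|\affine/\Lambda|\,\dim(\moduli_+:\affine)$, and letting $|\affine/\Lambda|\to\infty$ finishes.

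Your scheme, by contrast, has real gaps. First, the ``widim subadditivity'' inequality $\Widim_\varepsilon(\moduli,d_R)\le\Widim_\varepsilon(A_\delta,d_R)+\Widim_\varepsilon(K_R,d_R)+1$ is not a standard fact; $\Widim_\varepsilon$ is not known to be subadditive under unions of closed sets, and the paper substitutes for any such statement the more structured Theorem~\ref{theorem: pre-fiber theorem}, which requires an equivariant map injective off a closed invariant set rather than a bare decomposition. Second, and more seriously, there is no ``pointwise-energy-to-widim estimate underlying the proof of Theorem~\ref{theorem: upper bound for dim(M(X):C)}''. That theorem is proved by showing the discretization map is an embedding once $|\affine/\Lambda|^{-1}>2e(X)$; it never bounds $\Widim_\varepsilon$ on a fixed ball $B_R$ by something like $C\,\delta R^2 N$ from a hypothesis $T(R,f)\le\delta R^2$. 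Such a bound would be a new result of independent difficulty (a parameter count for Brody curves on $B_R$ in terms of the characteristic at radius $R$), and your compactness/diagonal-extraction paragraph, which only addresses when the bound on $T(R,f)$ holds, does nothing to supply it. Without these two ingredients the argument does not close.
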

The following are examples of spaces whose mean dimension is $0$. 
\begin{example}
This is a trivial example.
Let $X\subset \affine P^N$ be a compact hyperbolic manifold, i.e., all holomorphic curves in $X$ are constant maps.
Then $\moduli (X)$ consists of constant maps, and it is homeomorphic to $X$. $\moduli(X)_+$ is empty 
and we have
\[ \dim (\moduli (X) :\affine ) = \dim (\moduli(X)_+ :\affine ) =0 .\]
\end{example}
\begin{example}
Let $H_0, H_1, \cdots , H_{N}$ be the $N+1$ hyperplanes in $\affine P^N$:
\[
 H_i:\quad  \sum_{j = 0}^N a_{i j} z_j  = 0 \quad (0 \leq i \leq N).  
\]
Suppose that $H_0, H_1, \cdots , H_{N}$ are linearly independent, 
i.e., the coefficients matrix $(a_{ij})_{0 \leq i, j \leq N}$ is regular.
Let $\moduli$ be the closure of the set of Brody curves contained in 
$\affine P^N \setminus (H_0\cup \cdots \cup H_N)$:
\[ \moduli := \moduli (\affine P^N \setminus (H_0\cup \cdots \cup H_N)) = 
\overline{\{f\in \moduli (\affine P^N)|\, f(\affine ) \subset \affine P^N \setminus (H_0\cup \cdots \cup H_N)\}}
.\]
Then $\moduli$ is a finite dimensional space and $\moduli_+$ is empty. In particular we have
\[ \dim (\moduli :\affine ) = \dim (\moduli_+ :\affine ) = 0 .\]
\begin{proof}
We consider only the case of $H_i = \{z_i = 0\}$ $(0\leq i\leq N)$ for simplicity.
If $f$ is contained in $\moduli$, then we have
\[ f(\affine) \cap H_i = \emptyset \quad \text{or} \quad  f(\affine ) \subset H_i \quad \text{for each $H_i$}.\]
Suppose that 
\[ f(\affine ) \cap H_i = \emptyset \text{ for $i = 0,1,\cdots, m$, and } 
f(\affine ) \subset H_j  \text{ for $j = m+1, m+2, \cdots, N$}.\]
Then $f = [g: 0:\cdots:0]$ with a Brody curve $g:\affine \to \affine P^m\setminus (H_0 \cup H_1\cup \cdots \cup H_m)$.
From the theorem of F. Berteloot and J. Duval in \cite[Appendice]{Berteloot} (see also \cite[Section 6]{T1} and \cite{T2}), 
the Brody curve $g$ can be expressed by 
\[ g(z) = [1: e^{a_1 z+ b_1}: \cdots :e^{a_m z + b_m}] \quad \text{for some complex numbers $a_1, b_1, \cdots, a_m, b_m$} \]
Hence all $f\in \moduli $ can be expressed by (cf. \cite[Section 3]{Berteloot})
\[ f(z) = [c_0 e^{a_0 z}: c_1 e^{a_1 z}:\cdots :c_N e^{a_N z}] \quad 
\text{for some complex numbers $a_0, c_0, \cdots, a_N, c_N$}.\]
In addition we have $e(f) = 0$.
Therefore
\[ \dim (\moduli ) = 4N \quad \text{and}\quad \moduli_+ = \emptyset.\]
\end{proof}
\end{example}
\begin{example}
\[ \moduli := \moduli (\affine P^1 \setminus \{\infty\}) 
= \{ f\in \moduli (\affine P^1) |\, f(\affine) \subset \affine = \affine P^1 \setminus \{\infty \}
 \text{ or } f(\affine) = \{\infty \} \} .\]
For any polynomial $p(z)$, if we choose $\varepsilon >0$ sufficiently small, $p(\varepsilon z)$ belongs to $\moduli$.
Then it is easy to see that 
$\moduli$ is an infinite dimensional space. But from Minda \cite[Theorem 5]{Minda}, 
all $f\in \moduli$ has order $\leq 1$:
\[ \limsup_{r\to \infty} \log T(r, f) /\log r  \leq 1 .\]
Hence $e(f) = 0$ for all $f\in \moduli$ and $\moduli_+ = \emptyset$. Therefore 
\[ \dim(\moduli :\affine ) = \dim (\moduli_+ :\affine) = 0,\]
i.e., $\moduli$ is an infinite dimensional space whose mean dimension is $0$.
\end{example}
\begin{problem}
I don't know whether there is a hypersurface $V \subset \affine P^N$ such that 
\[ \dim (\moduli(\affine P^N \setminus V) :\affine ) >0,\]
(cf. Example \ref{example of quasi-projective cases}).
I don't know even whether there is a hypersurface $V$ such that 
$\moduli(\affine P^N\setminus V)_+$ is not empty.
\end{problem}

%%%%%%%%%%%%%%%%%%%%%%%%%%%%%%%%%%%%%%%%%%%%%%%%%%%%%%%%%%%%%%%%%%%%%%%%%%%%%%%%%%%%%%%%%%%%%%%%%%%%%%%%%%%%%%
%%%%%%%%%%%%%%%%%%%%%%%%%%%%%%%%%%%%%%%%%%%%%%%%%%%%%%%%%%%%%%%%%%%%%%%%%%%%%%%%%%%%%%%%%%%%%%%%%%%%%%%%%%%%%%

\subsection{Holomorphic 1-forms and mean dimension}
Theorem \ref{theorem: upper bound for dim(M(X):C)} can be applied to general algebraic sets.
If $X\subset \affine P^N$ is smooth, connected and has holomorphic 1-forms, 
then we can improve the estimate.

Let $X\subset \affine P^N$ be a smooth connected projective variety, and let $H^{1,0}$ be the space of holomorphic 
1-forms on $X$.
Let $\omega_1, \cdots, \omega_h$ ($h= \dim_{\affine} H^{1,0}$) be the basis of $H^{1,0}$, and let 
$\alpha$ be the Albanese map:
\[ \alpha: X \to \Alb(X) := \affine^h /\Gamma ,\quad 
x \mapsto (\int_p^x \omega_1, \cdots, \int_p^x \omega_h) ,\]
where $p\in X$ is a reference point and $\Gamma \subset \affine ^h$ is the lattice given by periods:
\[ \Gamma := \{ (\int_C \omega_1, \cdots, \int_C \omega_h )\in \affine^h|\, C \in H_1 (X:\mathbb{Z})\} .\]
The important data for us is the derivative $d\alpha$ of the Albanese map $\alpha$:
\[ d\alpha_x :T_xX = T_x^{1,0}X \to \affine^h, \quad v\mapsto (\omega_1(v), \cdots, \omega_h(v))   
\quad \text{for $x\in X$}.\]
We define the closed analytic set $Y\subset X$ by 
\begin{equation}\label{definition of Y}
Y := \{ x\in X|\, d\alpha_x \text{ is not injective}\}.
\end{equation}
Since $X\subset \affine P^N$, $Y$ is also an algebraic set in $\affine P^N$.
\begin{theorem}\footnote{Here we assume that $X$ is projective.
But actually this theorem is valid for compact K\"{a}hler manifolds. See Section 5.}
\label{theorem: dim(M(X):C) = dim(M(Y):C)}
\[ \dim(\moduli (X):\affine ) = \dim(\moduli (Y):\affine ) .\]
\end{theorem}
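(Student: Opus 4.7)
The inequality $\dim(\moduli(Y):\affine)\leq \dim(\moduli(X):\affine)$ is immediate from $\moduli(Y)\subset \moduli(X)$ and the monotonicity of mean dimension under closed invariant subsets. The plan for the reverse inequality is to show that the open $\affine$-invariant complement
\[
\moduli^{\circ} := \{f\in\moduli(X)\mid f(\affine)\not\subset Y\} = \moduli(X)\setminus \moduli(Y)
\]
has finite topological dimension, and hence contributes nothing to the mean dimension of $\moduli(X)$.

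The first ingredient is a Liouville type rigidity for the Albanese image. For any $f\in\moduli(X)$, since $\affine$ is simply connected, the composition $\alpha\circ f:\affine\to \Alb(X)=\affine^h/\Gamma$ lifts to an entire map $\tilde F=(F_1,\ldots,F_h):\affine\to\affine^h$ with $F_i'(z)=\omega_i(f'(z))$. Because $X$ is compact, each $\omega_i$ has bounded $C^0$-norm, and the Brody bound $|df|\leq 1$ then gives a uniform bound $|F_i'(z)|\leq\mathrm{const}$. By Liouville's theorem each $F_i'$ is constant, so $\tilde F(z)=Az+B$ for some $A,B\in\affine^h$ with $|A|$ bounded by a constant depending only on $X$.

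The second ingredient is a unique continuation argument on $\moduli^{\circ}$. For $f\in\moduli^{\circ}$, the set $f^{-1}(Y)$ is a proper closed analytic subset of $\affine$, hence discrete; choose $z_0\notin f^{-1}(Y)$, so that $f(z_0)\in X\setminus Y$. Since $d\alpha$ is injective at $f(z_0)$, $\alpha$ is a local embedding near $f(z_0)$, and the identity theorem for holomorphic maps shows that $f$ is globally determined by the pair $(\alpha\circ f,\,f(z_0))$. Moreover, any positive dimensional fiber of $\alpha$ is contained in $Y$ (the tangent space to such a fiber lies in $\ker d\alpha$), so the possible values of $f(z_0)$ form the finite set $\alpha^{-1}(Az_0+B\bmod\Gamma)\cap(X\setminus Y)$. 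Consequently the continuous $\affine$-equivariant map
\[
\Phi:\moduli^{\circ}\longrightarrow \moduli(\Alb(X)),\qquad f\mapsto \alpha\circ f,
\]
is finite to one with image in $\moduli(\Alb(X))\cong\{(A,B\bmod\Gamma):|A|\leq\mathrm{const}\}\subset \affine^h\times \Alb(X)$, a finite dimensional space. Since $\Phi$ is a local homeomorphism on each branch, $\moduli^{\circ}$ inherits finite topological dimension.

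To conclude, I would invoke Theorem \ref{theorem: dim(M:C) = dim(M_+:C)} to replace $\moduli(X)$ by $\moduli(X)_+$. Decomposing $\{f\in\moduli(X):e(f)>0\}$ along $\moduli^{\circ}$ and $\moduli(Y)$ and taking closures, one sees that $\moduli(X)_+\subset \moduli(Y)\cup K$ for some closed $\affine$-invariant $K\subset \moduli(X)$ with $K\setminus\moduli(Y)\subset\moduli^{\circ}$ of finite topological dimension. The hardest step, which is the main obstacle, is showing that adjoining such a $K$ to the closed invariant $\moduli(Y)$ does not increase the mean dimension. I expect to handle this by a direct $\Widim$ estimate on the Euclidean balls $|z|\leq r$: a subset of finite topological dimension contributes only an $O(1)$ term to $\Widim_{\varepsilon}$, which is swept away by the $r^{-2}$ normalisation in the definition of mean dimension. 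The final chain
\[
\dim(\moduli(X):\affine)=\dim(\moduli(X)_+:\affine)\leq \dim(\moduli(Y):\affine)
\]
then yields the theorem.
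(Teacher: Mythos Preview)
Your first two ingredients are essentially the same as the paper's: the Liouville/affine rigidity of $\alpha\circ f$ (Lemma~\ref{lemma: brody curves in complex torus}) and the unique continuation argument showing that a Brody curve not trapped in $Y$ is determined by very little data. The paper packages this a bit more cleanly: rather than the map $\Phi:f\mapsto\alpha\circ f$ into $\moduli(\Alb(X))$ and an unproven ``local homeomorphism on each branch'' claim, it takes a compact neighbourhood $K$ of $f$ in $\moduli(X)\setminus\moduli(Y)$ on which $g(0)$ stays inside a compact $D\subset X\setminus Y$, and uses the \emph{injective} continuous map $g\mapsto dg(\partial/\partial z)|_{z=0}\in TX$. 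Compactness of $K$ then gives $\dim K\leq 4\dim_\affine X$ directly, and $\sigma$-compactness of $\moduli(X)\setminus\moduli(Y)$ yields finite covering dimension of the complement. Your $\Phi$ argument can be repaired along these lines, but as written the passage from ``finite-to-one onto a finite-dimensional space'' to ``finite-dimensional domain'' is not justified.

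The real gap is the last step, and your detour through Theorem~\ref{theorem: dim(M:C) = dim(M_+:C)} does not help: after replacing $\moduli(X)$ by $\moduli(X)_+$ you are still faced with exactly the same obstacle, namely showing that adjoining a finite-dimensional invariant piece to $\moduli(Y)$ does not raise the mean dimension. The paper does \emph{not} use Theorem~\ref{theorem: dim(M:C) = dim(M_+:C)} here at all. Instead it proves the general fact (Proposition~\ref{proposition: dim(X:Z^k) = dim(Y:Z^k)}) that if $A\subset B$ are compact metric $\mathbb{Z}^k$-spaces with $\dim(B\setminus A)<\infty$, then $\dim(B:\mathbb{Z}^k)=\dim(A:\mathbb{Z}^k)$. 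The proof is short: collapse $A$ to a point to get a $\mathbb{Z}^k$-equivariant map $\pi:B\to B/A$; the quotient $B/A$ is finite-dimensional (it is a countable union of closed sets homeomorphic to pieces of $B\setminus A$, plus a point), hence has mean dimension $0$; since $\pi$ is injective off $A$, Theorem~\ref{theorem: pre-fiber theorem} gives $\dim(B:\mathbb{Z}^k)\leq \dim(B/A:\mathbb{Z}^k)+\dim(A:\mathbb{Z}^k)=\dim(A:\mathbb{Z}^k)$. Applying this with $B=\moduli(X)$, $A=\moduli(Y)$ (and $\mathbb{Z}^2\subset\affine$) finishes the proof. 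Your proposed ``direct $\Widim$ estimate'' would essentially have to reprove Theorem~\ref{theorem: pre-fiber theorem}; the quotient trick is the missing idea.
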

\begin{example}
Let $X$ be a compact smooth algebraic curve of genus $\geq 1$.
It is well-known that the Albanese map $\alpha: X\to \Alb(X)$ becomes an embedding.
Hence $Y = \emptyset$ and 
\[ \dim (\moduli (X) :\affine ) = 0.\]
(In fact $\moduli (X)$ is a finite dimensional space in this case.)
\end{example}
If $h = \dim_{\affine} H^{1,0} < \dim_\affine X$, then $Y=X$ and 
Theorem \ref{theorem: dim(M(X):C) = dim(M(Y):C)} becomes meaningless. 
Next we will develop a theorem which can cover this case.
The map 
\[ d\alpha :TX \to \affine^h, \quad v \mapsto (\omega_1 (v) , \cdots , \omega_h (v) ) \]
is a holomorphic map. Hence for each $u\in \affine^h$, the inverse image
$(d\alpha)^{-1} (u) \subset TX$ is a closed analytic set in $TX$.
If $h>0$, then $(d\alpha)^{-1} (u)$ is nowhere dense in $TX$ (by connectedness of $X$), and we have
\[ \dim_\affine \,(d\alpha)^{-1} (u) < \dim_\affine TX = 2\dim_\affine X \quad \text{for all $u\in \affine^h$}. \]
This dimension $\dim_\affine \,(d\alpha)^{-1} (u)$ can be used for an upper bound of the mean dimension:
\begin{theorem}\label{theorem: holomorphic 1-forms and mean dimension}
If $X$ is a smooth connected projective variety, then 
\[ \dim(\moduli (X):\affine ) \leq 2e(X) \max_{u\in \affine^h}\{ \dim_\affine \,(d\alpha)^{-1} (u)\}.\]
In particular, if $h>0$, then 
\[\dim(\moduli (X) :\affine) \leq e(X) (4\dim_\affine X -2) .\]
\end{theorem}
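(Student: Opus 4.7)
The starting point is to extract a pointwise constraint on each Brody curve from the holomorphic $1$-forms on $X$. For any $f\in\moduli(X)$ and any $\omega\in H^{1,0}$, the pullback $f^*\omega$ is a holomorphic $1$-form $g(z)\,dz$ on $\affine$, and $|g(z)|\leq\|\omega\|_{C^0(X)}\,|df/dz|(z)$ is bounded because $|df|\leq 1$. Liouville's theorem then forces $g$ to be a constant. Applying this to the basis $\omega_1,\dots,\omega_h$ produces constants $c_i(f)\in\affine$ with $f^*\omega_i=c_i(f)\,dz$, and the resulting map
\[
\Phi:\moduli(X)\to\affine^h,\qquad f\mapsto(c_1(f),\dots,c_h(f)),
\]
is continuous, $\affine$-invariant, and has compact image in $\affine^h$ (on which the $\affine$-action is trivial).

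For a curve in the fiber $\Phi^{-1}(u)$, the lift $\tilde f(z):=(f(z),df/dz(z))$ is a holomorphic map $\affine\to TX$ whose image lies entirely in the closed analytic subset $(d\alpha)^{-1}(u)\subset TX$, which has complex dimension at most $d_u:=\dim_\affine(d\alpha)^{-1}(u)$. The heart of the proof is the fiberwise mean-dimension bound
\[
\dim\bigl(\Phi^{-1}(u):\affine\bigr)\ \leq\ 2\,e(X)\,d_u,
\]
which I would obtain by repeating the $\Widim$-type interpolation argument that underlies Theorem \ref{theorem: upper bound for dim(M(X):C)}, applied to the derivative curve $\tilde f$ in the constrained target $(d\alpha)^{-1}(u)$ instead of to $f$ in $X$. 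The coefficient $2$ (versus the $4$ of Theorem \ref{theorem: upper bound for dim(M(X):C)}) simply records that $d_u$ is a \emph{complex} dimension inside $TX$; unconstrained one has $d_u=\dim_\affine TX=2\dim_\affine X$, which recovers the previous bound. This is the main obstacle: one must show that on a disk $D_r$ the curves in $\Phi^{-1}(u)$ can be $\varepsilon$-approximated by $\lesssim T(r,f)\cdot d_u$ jet-parameters valued in the (possibly singular) analytic set $(d\alpha)^{-1}(u)$, and then divide by $\pi r^2/2$ exactly as in the derivation of Theorem \ref{theorem: upper bound for dim(M(X):C)}.

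Because $\Phi$ is a continuous, equivariant map to a compact finite-dimensional space with trivial $\affine$-action, a standard covering/subadditivity argument for $\Widim$ upgrades the fiberwise estimate to the global bound
\[
\dim(\moduli(X):\affine)\ \leq\ \sup_{u\in\Phi(\moduli(X))}\dim(\Phi^{-1}(u):\affine)\ \leq\ 2\,e(X)\,\max_{u\in\affine^h}d_u,
\]
which is the main inequality. For the ``in particular'' clause, assume $h>0$: by connectedness of $X$ some $\omega_i$ is nonzero at a point $x_0\in X$, so $d\alpha:TX\to\affine^h$ is a non-constant holomorphic map on the irreducible complex space $TX$; hence every fiber $(d\alpha)^{-1}(u)$ is a proper analytic subset and $d_u\leq\dim_\affine TX-1=2\dim_\affine X-1$. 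Substituting yields $\dim(\moduli(X):\affine)\leq e(X)(4\dim_\affine X-2)$, as claimed.
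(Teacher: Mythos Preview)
Your opening observation --- that $f^*\omega_i=c_i(f)\,dz$ is constant by Liouville, so the jet $\tilde f=(f,\,df/dz)$ takes values in $(d\alpha)^{-1}(c(f))$ --- is exactly the geometric input the paper uses, and your derivation of the ``in particular'' clause is fine. The execution, however, has two soft spots that together constitute a real gap. First, the fiberwise bound is not obtained by ``repeating the $\Widim$-type interpolation argument of Theorem~\ref{theorem: upper bound for dim(M(X):C)}'' applied to $\tilde f$: that theorem rests on Lemma~\ref{lemma: discretization 1} (value discretization in $\affine P^N$), and $\tilde f$ is not a curve in projective space, nor do you control $e(\tilde f)$; your description via ``$\lesssim T(r,f)\cdot d_u$ jet parameters on $D_r$'' does not match any mechanism in the paper. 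Second, the passage from fiberwise to global by a ``standard covering/subadditivity argument for $\Widim$'' is asserted, not proved; fiber inequalities for mean dimension are \emph{not} standard --- the paper itself poses the general version as an open problem (Problem~4.8) --- and even with trivial action on the base it is unclear how to make $\Widim_\varepsilon(\Phi^{-1}(\overline V),d_{I_n})$ uniform over small neighbourhoods $V\ni u$.

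The paper sidesteps both issues by never fibering. Choose a lattice $\Lambda$ with $e(X)<1/|\affine/\Lambda|$; then Lemma~\ref{lemma: discretization 2} (derivative discretization: $df|_\Lambda=dg|_\Lambda$ and $e(f)+e(g)<2/|\affine/\Lambda|$ force $f\equiv g$) makes
\[
S:\moduli(X)\to BX^\Lambda,\qquad f\mapsto\bigl(df(\partial/\partial z)|_{z=\lambda}\bigr)_{\lambda\in\Lambda},
\]
a $\Lambda$-equivariant topological embedding of all of $\moduli(X)$. Your Liouville step says precisely that $d\alpha\bigl(df/dz(\lambda)\bigr)$ is independent of $\lambda$, i.e.\ $S(\moduli(X))\subset A^{-1}(\Delta)$, where $A:BX^\Lambda\to D^\Lambda$ is induced by $d\alpha$ and $\Delta$ is the diagonal. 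Proposition~\ref{proposition: mean dimension of subshift} then gives, in one stroke,
\[
\dim(\moduli(X):\Lambda)\ \leq\ \dim(A^{-1}(\Delta):\Lambda)\ \leq\ \sup_{u}\dim\bigl((d\alpha)^{-1}(u)\cap BX\bigr)\ \leq\ 2\max_{u} d_u,
\]
and letting $1/|\affine/\Lambda|\to e(X)$ finishes. This single subshift estimate replaces both your fiberwise bound and your reassembly; note that the factor $2$ in Lemma~\ref{lemma: discretization 2} exactly compensates for passing from $X$ to $TX$, which is why the final coefficient is $2e(X)$ rather than $4e(X)$.
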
 

%%%%%%%%%%%%%%%%%%%%%%%%%%%%%%%%%%%%%%%%%%%%%%%%%%%%%%%%%%%%%%%%%%%%%%%%%%%%%%%%%%%%%%%%%%%%%%%%%%
%%%%%%%%%%%%%%%%%%%%%%%%%%%%%%%%%%%%%%%%%%%%%%%%%%%%%%%%%%%%%%%%%%%%%%%%%%%%%%%%%%%%%%%%%%%%%%%%%%

\subsection{Organization of the paper}
In Section 2 we study ``discretization" of holomorphic curves and prove 
Theorem \ref{theorem: upper bound for dim(M(X):C)}.
In Section 3 we prove Theorem \ref{theorem: lower bound for the case of projective space}.
In Subsection 4.1 we review the definitions and basic properties of mean dimension.
Readers who are not familiar with mean dimension can read Subsection 4.1 first 
before reading other sections.
In Subsection 4.2 we show some general results on mean dimension and prove 
Theorem \ref{theorem: dim(M:C) = dim(M_+:C)}.
In Section 5 we prove Theorem \ref{theorem: dim(M(X):C) = dim(M(Y):C)} and
\ref{theorem: holomorphic 1-forms and mean dimension}.

%%%%%%%%%%%%%%%%%%%%%%%%%%%%%%%%%%%%%%%%%%%%%%%%%%%%%%%%%%%%%%%%%%%%%%%%%%%%%%%%%%%%%%%%%%%%%%%%%%%%%%%%%%%%%
%%%%%%%%%%%%%%%%%%%%%%%%%%%%%%%%%%%%%%%%%%%%%%%%%%%%%%%%%%%%%%%%%%%%%%%%%%%%%%%%%%%%%%%%%%%%%%%%%%%%%%%%%%%%%
%%%%%%%%%%%%%%%%%%%%%%%%%%%%%%%%%%%%%%%%%%%%%%%%%%%%%%%%%%%%%%%%%%%%%%%%%%%%%%%%%%%%%%%%%%%%%%%%%%%%%%%%%%%%%

\section{Discretization of holomorphic curves}\label{section: Discretization of holomorphic curves}
Let $\Lambda \subset \affine$ be a lattice in the complex plane $\affine$.
Let $(\affine P^N)^\Lambda$ be the infinite product of the copies of $\affine P^N$ indexed by $\Lambda$:
\[ (\affine P^N)^\Lambda := \{ (w_\lambda)_{\lambda \in \Lambda}|\, w_\lambda \in \affine P^N\} .\]
First we study the following ``discretization map" (cf. Gromov \cite[p. 329]{Gromov}):
\[ \moduli (\affine P^N) \to (\affine P^N)^\Lambda , 
\quad f\mapsto f|_{\Lambda} := (f(\lambda ))_{\lambda \in \Lambda} .\]
\begin{lemma}\label{lemma: discretization 1}
Let $f, g:\affine \to \affine P^N$ be holomorphic maps with $e(f), e(g) < \infty$, and suppose 
\[ e(f) + e(g) < 1/|\affine /\Lambda | .\]
Here $|\affine /\Lambda |$ is the volume of the elliptic curve $\affine /\Lambda$, 
i.e. the area of the fundamental domain of $\Lambda$ in $\affine$.
Then if $f|_{\Lambda} = g|_{\Lambda}$, we have $f \equiv g$.
\end{lemma}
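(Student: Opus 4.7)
The plan is to argue by contradiction. Suppose $f\not\equiv g$ while $f|_{\Lambda}=g|_{\Lambda}$ and $e(f)+e(g)<1/|\affine/\Lambda|$. The idea is to show that the coincidence locus $\{f=g\}$, which contains $\Lambda$, grows so fast that a Nevanlinna--style first main theorem comparison with $T(r,f)+T(r,g)$ contradicts the mean--energy hypothesis.

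Since $\affine$ is a contractible Stein manifold, every holomorphic line bundle on $\affine$ is trivial, and one can lift $f$ and $g$ to entire maps $\tilde{f},\tilde{g}:\affine\to \affine^{N+1}\setminus\{0\}$ with $f=[\tilde{f}]$ and $g=[\tilde{g}]$. Form the entire map $F:=\tilde{f}\wedge \tilde{g}:\affine\to \bigwedge^{2}\affine^{N+1}$, whose components are the Pl\"ucker minors $F_{ij}=f_{i}g_{j}-f_{j}g_{i}$. Then $F\not\equiv 0$ since $f\not\equiv g$, and the common zero divisor of the $F_{ij}$'s equals the coincidence divisor $\{f=g\}$. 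Because $F$ vanishes on $\Lambda$, the zero counting function obeys
\[
n(r,0,F)\;\geq\;\#\bigl(\Lambda\cap\{|z|\leq r\}\bigr)\;=\;\frac{\pi r^{2}}{|\affine/\Lambda|}+O(r),
\]
so its logarithmic integral $N(r,0,F):=\int_{1}^{r}n(t,0,F)\,dt/t$ satisfies $N(r,0,F)\geq \pi r^{2}/(2|\affine/\Lambda|)-O(r)$.

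For the matching upper bound I would use the Lagrange identity $\|F\|^{2}=\|\tilde{f}\|^{2}\|\tilde{g}\|^{2}-|\langle\tilde{f},\tilde{g}\rangle|^{2}$, which gives $\log\|F\|\leq \log\|\tilde{f}\|+\log\|\tilde{g}\|$ pointwise. Choosing a Weierstrass factorization $F=B\cdot G$ in which $B$ is a scalar entire function carrying the zero divisor of $F$ and $G$ is entire and nowhere vanishing, Jensen's formula applied to $B$ together with the sub--mean--value property of the psh function $\log\|G\|$ yields
\[
N(r,0,F)\;\leq\;\frac{1}{2\pi}\int_{0}^{2\pi}\log\|F(re^{i\theta})\|\,d\theta+O(1)\;\leq\;M(r,f)+M(r,g)+O(1),
\]
where $M(r,h):=\frac{1}{2\pi}\int_{0}^{2\pi}\log\|\tilde{h}(re^{i\theta})\|\,d\theta$. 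The identity $|df|^{2}=\frac{1}{4\pi}\Delta\log\|\tilde{f}\|^{2}$ is a direct consequence of (\ref{def:Fubini-Study}), and combined with Stokes' theorem it gives $\frac{d}{dr}M(r,f)=\frac{1}{r}\int_{|z|\leq r}|df|^{2}\,dx\,dy$, so $M(r,f)=T(r,f)+O(1)$ and similarly for $g$. Therefore $N(r,0,F)\leq T(r,f)+T(r,g)+O(1)$.

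Combining the two bounds and dividing by $\pi r^{2}/2$, the $\limsup$ as $r\to\infty$ gives $1/|\affine/\Lambda|\leq e(f)+e(g)$, contradicting the hypothesis. The main technical point is the vector--valued Jensen inequality: one must identify the counting function of the common zero divisor of the components of $F$, split off a scalar Weierstrass factor $B$ carrying that divisor, and control the residual psh piece $\log\|G\|$ via its sub--mean--value property. Everything else---the Lagrange identity, the lattice--point asymptotics, and the equality $M(r,\cdot)=T(r,\cdot)+O(1)$---is routine Nevanlinna bookkeeping.
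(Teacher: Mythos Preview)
Your proof is correct and takes a different technical route from the paper. The paper's argument reduces to \emph{scalar} Nevanlinna theory: by Baire category one finds a hyperplane $H=\{z_0=0\}$ avoiding the countable set $f(\Lambda)=g(\Lambda)$, writes $f=[1:f_1:\cdots:f_N]$ and $g=[1:g_1:\cdots:g_N]$ with meromorphic $f_i,g_i$ that are finite on $\Lambda$, and then applies the scalar first main theorem to $(f_i-g_i)^{-1}$, which has a pole at every lattice point. This yields
\[
\frac{\pi r^2}{2|\affine/\Lambda|}+O(r)\;\leq\;T(r,(f_i-g_i)^{-1})\;=\;T(r,f_i-g_i)+O(1)\;\leq\;T(r,f_i)+T(r,g_i)+O(1)
\]
directly, with only the classical FMT as input.

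Your version stays coordinate-free, working with reduced representations $\tilde f,\tilde g$ and Cartan's characteristic $M(r,\cdot)$, and replaces the scalar FMT by a vector-valued Jensen inequality for the wedge $\tilde f\wedge\tilde g$. This avoids the Baire hyperplane trick and is more intrinsic, at the cost of invoking a Weierstrass factorization for the common divisor of the Pl\"ucker minors and the identity $M(r,f)=T(r,f)+O(1)$. Both arguments are essentially the same Nevanlinna bookkeeping; the paper's is shorter because it piggybacks on the one-variable theory, while yours is the natural approach if one thinks of $T(r,f)$ as the Ahlfors--Cartan order function from the outset.
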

\begin{proof}\footnote{A similar argument is given by A. Eremenko in \cite[Theorem 2.5]{Eremenko}.}
Let $[z_0:\cdots :z_n]$ be the homogeneous coordinate in $\affine P^N$.
Since $f(\Lambda) = g(\Lambda)$ is a countable set in $\affine P^N$, there is a hyperplane
$H\subset \affine P^N$ such that $f(\Lambda)\cap H = \emptyset$ (by Baire's theorem).
We can suppose that $H = \{z_0 =0\}$ without loss of generality.

Then we can express $f$ and $g$ by $f=[1:f_1:\cdots:f_N]$ and $g=[1:g_1:\cdots:g_N]$ with 
meromorphic functions $f_1, \cdots, f_N, g_1\cdots, g_N$ such that $f_i|_{\Lambda} = g_i|_{\Lambda}$ and 
$\infty \notin f_i(\Lambda) = g_i(\Lambda)$. 
The standard argument in the Nevanlinna theory gives
\[ T(r, f_i) \leq T(r, f) + O(1) .\]
Hence we have $e(f_i) \leq e(f)$ and $e(g_i) \leq e(g)$.
We want to prove that $f_i \equiv g_i$ for all $i$.
Suppose $f_1 \not\equiv g_1$. Then non-constant meromorphic function $(f_1 - g_1)^{-1}$ has a pole
at each point of the lattice $\Lambda$. (Here we have used $\infty \notin f_i(\Lambda) = g_i(\Lambda)$.)
From the first main theorem of Nevanlinna,
\[\frac{\pi r^2}{2|\affine/\Lambda|} + O(r) \leq T(r, (f_1 - g_1)^{-1}) = T(r, f_1 - g_1) + O(1) 
\leq T(r, f_1) + T(r, g_1) + O(1) .\]
Then 
\[ \frac{1}{|\affine /\Lambda|} \leq e(f_1) + e(g_1) \leq e(f) + e(g) .\]
This contradicts the assumption.
\end{proof}
\begin{remark}
In the above proof, we did not use the second main theorem of Nevanlinna.
Actually we don't need the second main theorem in any part of this paper.
I don't know how to apply the second main theorem to the theory of mean dimension.
\end{remark}
Next we study the following map:
\[ \moduli (\affine P^N) \to (T\affine P^N)^\Lambda ,\quad
 f\mapsto df|_{\Lambda} := (df (\partial /\partial z)|_{z = \lambda})_{\lambda \in \Lambda}
\in \prod_{\lambda \in \Lambda} T_{f(\lambda)}\affine P^N .\]
This map has the information of derivative of holomorphic curves at each point of the lattice $\Lambda$.
\begin{lemma}\label{lemma: discretization 2}
Let $f, g:\affine \to \affine P^N$ be holomorphic maps with $e(f), e(g) < \infty$, and suppose 
\[ e(f) + e(g) < 2/|\affine /\Lambda | .\]
Then if $df|_{\Lambda} = dg|_{\Lambda}$, we have $f \equiv g$.
\end{lemma}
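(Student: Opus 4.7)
The plan is to imitate the proof of Lemma 2.1 almost verbatim, but exploit the extra information coming from derivative agreement to get a factor of $2$ in the counting estimate. The hypothesis $df|_{\Lambda} = dg|_{\Lambda}$ already implicitly contains $f|_{\Lambda} = g|_{\Lambda}$, since the target tangent spaces $T_{f(\lambda)}\affine P^N$ and $T_{g(\lambda)}\affine P^N$ have to coincide for the equation to make sense.

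First I would invoke Baire's theorem exactly as in Lemma 2.1: the set $f(\Lambda) = g(\Lambda)$ is countable, so there exists a hyperplane $H \subset \affine P^N$ avoiding it, which after a projective change of coordinates we may take to be $\{z_0 = 0\}$. Then we can write $f = [1:f_1:\cdots:f_N]$ and $g = [1:g_1:\cdots:g_N]$ with meromorphic $f_i, g_i$ such that $\infty \notin f_i(\Lambda) = g_i(\Lambda)$ and $f_i(\lambda) = g_i(\lambda)$ for every $\lambda \in \Lambda$. Translating the derivative agreement through the affine chart, we additionally get $f_i'(\lambda) = g_i'(\lambda)$ for all $\lambda \in \Lambda$ and all $i$. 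Therefore the holomorphic (a priori meromorphic, but by the no-pole condition holomorphic near $\Lambda$) function $f_i - g_i$ vanishes to order at least $2$ at every lattice point.

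Assume for contradiction that $f_1 \not\equiv g_1$. Then $(f_1 - g_1)^{-1}$ is a non-constant meromorphic function with a pole of order at least $2$ at every $\lambda \in \Lambda$. The counting function therefore satisfies
\[
n(r,(f_1-g_1)^{-1}) \;\geq\; 2 \cdot \#\{\lambda \in \Lambda : |\lambda| \leq r\} \;=\; \frac{2\pi r^2}{|\affine/\Lambda|} + O(r),
\]
so $N(r,(f_1-g_1)^{-1}) \geq \pi r^2/|\affine/\Lambda| + O(r)$. By the first main theorem of Nevanlinna and the standard inequality $T(r,h_1-h_2) \leq T(r,h_1)+T(r,h_2)+O(1)$,
\[
\frac{\pi r^2}{|\affine/\Lambda|} + O(r) \;\leq\; T(r,(f_1-g_1)^{-1}) \;=\; T(r,f_1-g_1)+O(1) \;\leq\; T(r,f_1)+T(r,g_1)+O(1).
\]
Dividing by $\pi r^2/2$, taking $\limsup$, and using $T(r,f_i) \leq T(r,f)+O(1)$ (hence $e(f_i)\leq e(f)$) as in Lemma 2.1, we obtain $2/|\affine/\Lambda| \leq e(f_1)+e(g_1) \leq e(f)+e(g)$, contradicting the hypothesis. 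Thus $f_1 \equiv g_1$, and by symmetry $f_i \equiv g_i$ for all $i$, so $f \equiv g$.

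The only real content beyond Lemma 2.1 is the sharpened counting estimate due to the doubled order of vanishing; no new analytic input is needed. I do not expect a genuine obstacle, but the one place to be a little careful is the passage through the affine chart, which has to confirm that agreement of tangent vectors in $T_{f(\lambda)}\affine P^N$ really does translate to agreement of the ordinary derivatives $f_i'(\lambda) = g_i'(\lambda)$ in the chosen chart; this is immediate because $[1:f_1:\cdots:f_N]$ is a local holomorphic section of the tautological bundle near $\lambda$, so the chart differential is an isomorphism of tangent spaces and identifies $df(\partial/\partial z)$ with $(f_1'(\lambda),\dots,f_N'(\lambda))$.
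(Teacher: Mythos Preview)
Your proposal is correct and follows essentially the same approach as the paper's own proof: reduce to affine coordinates via Baire, observe that agreement of values and first derivatives forces $(f_1-g_1)^{-1}$ to have poles of multiplicity at least $2$ at every lattice point, and feed the resulting doubled counting estimate into the first main theorem to reach the contradiction $2/|\affine/\Lambda| \leq e(f)+e(g)$. The only cosmetic difference is that you spell out the intermediate step through $n(r,\cdot)$ and $N(r,\cdot)$, whereas the paper writes the lower bound for $T$ directly.
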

\begin{proof}
$df|_{\Lambda} = dg|_{\Lambda}$ implies $f|_{\Lambda} = g|_{\Lambda}$ by definition.
Hence we can suppose that we can express $f$ and $g$ by $f=[1:f_1:\cdots:f_N]$ and $g=[1:g_1:\cdots:g_N]$ with 
meromorphic functions $f_1, \cdots, f_N, g_1\cdots, g_N$ such that $f_i|_{\Lambda} = g_i|_{\Lambda}$ and 
$\infty \notin f_i(\Lambda) = g_i(\Lambda)$. We have also $e(f_i) \leq e(f)$ and $e(g_i) \leq e(g)$.
From $df|_{\Lambda} = dg|_{\Lambda}$, we have 
$f_i(\lambda) = g_i(\lambda)$ and $f'_i(\lambda) = g'_i(\lambda)$ for all $\lambda \in \Lambda$. 
Suppose $f_1 \not\equiv g_1$.
Then the non-constant meromorphic function $(f_1 -g_1)^{-1}$ has 
a pole of multiplicity $\geq 2$ at each point of $\Lambda$.
From the first main theorem of Nevanlinna,
\[\frac{\pi r^2}{|\affine /\Lambda |} + O(r) \leq T(r, (f_1 - g_1)^{-1}) =  T(r, f_1 - g_1) + O(1) 
\leq T(r, f_1) + T(r, g_1) + O(1) .\]
Then
\[ \frac{2}{|\affine /\Lambda|} \leq e(f_1) + e(g_1) \leq e(f) + e(g) .\]
This contradicts the assumption.
\end{proof}
We prove Theorem \ref{theorem: upper bound for dim(M(X):C)} by using Lemma \ref{lemma: discretization 1}.
(Lemma \ref{lemma: discretization 2} will be used later in the proof of 
Theorem \ref{theorem: holomorphic 1-forms and mean dimension}.)
\begin{proof}[Proof of Theorem \ref{theorem: upper bound for dim(M(X):C)}]
Let $\Lambda \subset \affine$ be a lattice satisfying 
\[ 2e(X) < 1/|\affine/\Lambda| \]
where $X\subset \affine P^N$ is a given algebraic set.
Consider the ``discretization map":
\begin{equation}\label{eq: discretization for X}
 \moduli (X) \to X^\Lambda, \quad f\mapsto f|_{\Lambda} .
\end{equation}
This map naturally becomes a $\Lambda$-equivariant map,
and it is continuous (here we consider the direct product topology on $X^\Lambda$).
For any two $f, g\in \moduli (X)$ we have
\[ e(f) + e(g) \leq 2e(X) < 1/|\affine/\Lambda|. \]
Then Lemma \ref{lemma: discretization 1} implies that 
the discretization map (\ref{eq: discretization for X}) is injective.
Since $\moduli (X)$ is compact, this means that $\moduli (X)$ is $\Lambda$-equivariantly
homeomorphic to the image of (\ref{eq: discretization for X}). Therefore
\[ \dim(\moduli (X):\Lambda) \leq \dim(X^\Lambda :\Lambda ) = \dim X = 2\dim_\affine X \]
where $\dim X$ denotes the topological covering dimension of $X$.
Then (cf. Subsection 4.1)
\[ \dim (\moduli (X):\affine) = |\affine /\Lambda|^{-1} \dim(\moduli (X) :\Lambda) 
\leq 2|\affine /\Lambda |^{-1} \dim_\affine X .\]
$|\affine /\Lambda |^{-1}$ can be taken arbitrarily close to $2e(X)$. Hence 
\[ \dim(\moduli (X):\affine ) \leq 4e(X)\dim_\affine X.\]
\end{proof}

%%%%%%%%%%%%%%%%%%%%%%%%%%%%%%%%%%%%%%%%%%%%%%%%%%%%%%%%%%%%%%%%%%%%%%%%%%%%%%%%%%%%%%%%%%%%%%%%%%%%%%%%%%%%%%%%%%%%%%%%%%%%%%%%
%%%%%%%%%%%%%%%%%%%%%%%%%%%%%%%%%%%%%%%%%%%%%%%%%%%%%%%%%%%%%%%%%%%%%%%%%%%%%%%%%%%%%%%%%%%%%%%%%%%%%%%%%%%%%%%%%%%%%%%%%%%%%%%%
%%%%%%%%%%%%%%%%%%%%%%%%%%%%%%%%%%%%%%%%%%%%%%%%%%%%%%%%%%%%%%%%%%%%%%%%%%%%%%%%%%%%%%%%%%%%%%%%%%%%%%%%%%%%%%%%%%%%%%%%%%%%%%%%

\section{Constructing a shift space in $\moduli (\affine P^N)$}
To begin with, note that the following map is a holomorphic isometric embedding:
\[ \affine P^1 \to \affine P^N , \quad [1:z]\to [1:z/\sqrt{N}:\cdots:z/\sqrt{N}] .\]
This fact is behind the arguments in this section.

In this section we will prove Theorem \ref{theorem: lower bound for the case of projective space}
by constructing a ``shift space" in $\moduli (\affine P^N)$. 
Our argument is a variant of the argument of Gromov \cite[p. 398, 3.5.1]{Gromov}.
Let $\Lambda\subset \affine$ be a lattice, $A>0$ be a positive number.
We define the annulus $\Omega \subset \affine$ by
\[ \Omega := \{z\in \affine |\, A\leq |z|\leq 2A \} .\]
For $a = (a_{n\lambda})_{1\leq n\leq N, \lambda\in \Lambda} \in (\Omega^N)^\Lambda$ 
(i.e. $A\leq |a_{n\lambda}|\leq 2A$), we define the holomorphic map $f_a:\affine \to \affine P^N$ by
\[ f_a(z):= [1: \frac{1}{\sqrt{N}}\sum_{\lambda\in \Lambda} \frac{a_{1\lambda}}{(z-\lambda)^3} :
\frac{1}{\sqrt{N}}\sum_{\lambda\in \Lambda} \frac{a_{2\lambda}}{(z-\lambda)^3}:\cdots :
\frac{1}{\sqrt{N}}\sum_{\lambda\in \Lambda} \frac{a_{N\lambda}}{(z-\lambda)^3}] .\]
The following is the basis of the proof:
\begin{proposition}\label{proposition: |df_a| < const}
There is a positive constant $C(\Lambda, A)$ independent of $N$ such that 
\[ |df_a|(z) \leq C(\Lambda, A) \quad \text{for all $z\in \affine$ and all $a\in (\Omega^N)^\Lambda$}.\]
(The important point of this statement is that $C(\Lambda, A)$ is independent of $N$.)
\end{proposition}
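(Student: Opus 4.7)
The plan is to bound $|df_a|$ via the standard formula
\begin{equation*}
|df_a|^2(z) \;=\; \frac{1}{\pi}\cdot \frac{(1+|g|^2)\,|g'|^2 \;-\; |\langle g,g'\rangle|^2}{(1+|g|^2)^2},
\end{equation*}
where $g=(g_1,\dots,g_N)\colon \affine\to\affine^N$ with $g_n(z)=\frac{1}{\sqrt N}\sum_{\lambda\in\Lambda}\frac{a_{n\lambda}}{(z-\lambda)^3}$, $|g|^2=\sum_n|g_n|^2$, and $\langle g,g'\rangle=\sum_n\overline{g_n}\,g_n'$. I would split into two regimes. Fix $\delta=\delta(\Lambda,A)>0$ small. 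When $\mathrm{dist}(z,\Lambda)\geq\delta$, the bound $|a_{n\lambda}|\leq 2A$ and the convergence of $\sum_{\lambda}|z-\lambda|^{-3}$ (uniform in such $z$) gives $|g_n(z)|\leq C(\Lambda,A,\delta)/\sqrt N$ and likewise $|g_n'(z)|\leq C(\Lambda,A,\delta)/\sqrt N$. Hence $|g|$ and $|g'|$ are bounded uniformly in $N$ on this set, and so is $|df_a|$.

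The interesting regime is $z$ close to some $\lambda_0\in\Lambda$. Setting $w=z-\lambda_0$, decompose
\[
g = g^{s}+g^{r},\quad g^{s}_n:=\frac{a_{n\lambda_0}}{\sqrt N\,w^3},\quad g^{r}_n:=\frac{1}{\sqrt N}\sum_{\lambda\neq\lambda_0}\frac{a_{n\lambda}}{(z-\lambda)^3},
\]
and similarly $g'=g'^{s}+g'^{r}$ with $g'^{s}_n=-3a_{n\lambda_0}/(\sqrt N\,w^4)$. Two features drive the estimate. First (size of regular parts): for $|w|\leq\delta$, $|g_n^{r}|\leq C(\Lambda,A)/\sqrt N$ pointwise, hence $|g^{r}|^2=\sum_n|g_n^{r}|^2\leq C(\Lambda,A)$ independently of $N$; the $N^{-1/2}$ normalization is exactly what makes the regular parts bounded uniformly in $N$. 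The same applies to $|g'^{r}|$. Second (algebraic cancellation): $g'^{s}_n=-(3/w)\,g^{s}_n$, so $g^{s}$ and $g'^{s}$ are parallel in $\affine^N$, whence $g^{s}\wedge g'^{s}=0$.

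Now use the Gram identity $(1+|g|^2)|g'|^2-|\langle g,g'\rangle|^2 = |g'|^2 + |g\wedge g'|^2$ and expand $g\wedge g'$ in the $s/r$ decomposition. The $s\wedge s$ piece vanishes by parallelism, and the surviving terms $g^{s}\wedge g'^{r}$, $g^{r}\wedge g'^{s}$, $g^{r}\wedge g'^{r}$ are estimated using $|g^{s}|\leq 2A/|w|^3$, $|g'^{s}|\leq 6A/|w|^4$, and the uniform bounds on $|g^{r}|,|g'^{r}|$, yielding $|g\wedge g'|^2\leq C(\Lambda,A)/|w|^8$ and $|g'|^2\leq C(\Lambda,A)/|w|^8$. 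On the other hand $|g^{s}|^2\geq A^2/|w|^6$ from $|a_{n\lambda_0}|\geq A$, and for $\delta$ small we have $|g^{s}|\geq 2|g^{r}|$, so $|g|\geq|g^{s}|/2$ and $(1+|g|^2)^2\geq A^4/(16|w|^{12})$. Dividing, $|df_a(z)|^2\leq C(\Lambda,A)|w|^4$, bounded uniformly in $N$ and even vanishing as $w\to 0$. Matching with the bound from the first regime gives the desired $C(\Lambda,A)$.

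The main obstacle I anticipate is the bookkeeping in the last step: one must carefully expand $|g\wedge g'|^2$ in the $s/r$ pieces and verify that each surviving term is controlled by a $\Lambda,A$-constant times a power of $|w|^{-1}$ strictly below $|w|^{-12}$. This is where both the exponent $3$ (for convergence of the lattice sums and for the right scaling of $|g^{s}|$ versus $|g'^{s}|$) and the $N^{-1/2}$ normalization (for $N$-independence of the regular parts via the $\ell^2$-averaging over the $N$ components) are simultaneously needed; once the decomposition and parallelism of the singular parts are in hand, each individual estimate is routine.
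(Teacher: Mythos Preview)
Your proposal is correct and follows essentially the same strategy as the paper: split into the regimes $d(z,\Lambda)\geq d$ and $d(z,\Lambda)<d$, use the lattice-sum bounds (the paper's Lemma~3.2) together with the $1/\sqrt{N}$ normalization in the far regime, and exploit the cancellation of the leading singular parts in the near regime. The only cosmetic difference is the packaging of the near-pole estimate: the paper changes homogeneous coordinates to $[z^3:g_1:\cdots:g_N]$ with $g_n=z^3f_n$ (so the pole is absorbed and the cross terms $g_ng_m'-g_n'g_m=z^6(f_nf_m'-f_n'f_m)$ become regular), whereas you keep the affine coordinates, decompose $g=g^s+g^r$, and invoke the parallelism $g'^s=-(3/w)g^s$ to kill $g^s\wedge g'^s$; these are the same cancellation viewed two ways.
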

Let $\delta = \delta(\Lambda)$ be a positive number satisfying
\[  2\delta \leq |\lambda_1 -\lambda_2| \quad \text{for any $\lambda_1, \lambda_2 \in \Lambda$ with
$\lambda_1 \neq \lambda_2$} .\]
The proof of Proposition \ref{proposition: |df_a| < const} needs the following lemma
(similar estimates are given in Eremenko \cite[Lemma 6.2]{Eremenko}):
\begin{lemma}\label{lemma: estimate of infinite sum}
For positive numbers $s>2$ and $d\leq \delta$, there is a positive constant $c_1(\Lambda, s, d)$ 
such that
\[ \sum_{\lambda\in \Lambda} \frac{1}{|z-\lambda|^s} \leq c_1(\Lambda, s, d) \quad 
\text{for all $z\in \affine$ with $d(z, \Lambda) \geq d$} .\]
Moreover there is a positive constant $c_2(\Lambda, s)$ satisfying the following;
for any $z\in \affine$ with $d(z, \Lambda) < \delta$, 
let $\lambda_0\in \Lambda$ be a (unique) point in $\Lambda$ such that $|z-\lambda_0|\ <\delta$. 
Then
\[ \sum_{\lambda\in \Lambda\setminus\{\lambda_0\}} \frac{1}{|z-\lambda|^s} \leq c_2(\Lambda, s). \]
\end{lemma}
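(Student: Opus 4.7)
The plan is to prove both estimates by classical lattice-sum techniques, separating the nearby and far contributions in each case.

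For the first estimate, assume $d(z,\Lambda) \geq d$. I would split $\Lambda$ into a ``nearby'' part $\Lambda_{\mathrm{near}} := \{\lambda \in \Lambda : |z - \lambda| \leq 3\delta\}$ and a ``far'' part $\Lambda_{\mathrm{far}}$. Since distinct points of $\Lambda$ are separated by at least $2\delta$, the open discs of radius $\delta$ centered at the points of $\Lambda$ are pairwise disjoint. Packing such discs into a disc of radius $4\delta$ around $z$ forces $|\Lambda_{\mathrm{near}}|$ to be bounded by an absolute constant; combined with the hypothesis $|z-\lambda| \geq d$, this yields $\sum_{\lambda \in \Lambda_{\mathrm{near}}} |z-\lambda|^{-s} \leq C(\Lambda)/d^s$. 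For $\Lambda_{\mathrm{far}}$ I would use a disjoint-disc integral comparison: for each $\lambda \in \Lambda_{\mathrm{far}}$ the open disc $D_\lambda$ of radius $\delta$ about $\lambda$ lies in $\{w : |z - w| > 2\delta\}$, and for $w \in D_\lambda$ one has $|z - w| \geq |z-\lambda| - \delta \geq (2/3)|z-\lambda|$. Integrating the resulting pointwise estimate $|z-\lambda|^{-s} \leq (3/2)^{s}|z-w|^{-s}$ over $D_\lambda$ and summing the disjoint pieces bounds $\sum_{\Lambda_{\mathrm{far}}}|z-\lambda|^{-s}$ by a constant multiple of $\int_{|w|>2\delta}|w|^{-s}\,dw$, which is finite precisely because $s > 2$.

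For the second estimate, the assumption $d(z,\Lambda) < \delta$ singles out a unique closest point $\lambda_0 \in \Lambda$ with $|z - \lambda_0| < \delta$. Every other $\lambda \in \Lambda$ satisfies $|\lambda - \lambda_0| \geq 2\delta$, so
\[
|z - \lambda| \geq |\lambda - \lambda_0| - |z - \lambda_0| \geq \tfrac{1}{2}|\lambda - \lambda_0|.
\]
Therefore $\sum_{\lambda \neq \lambda_0} |z - \lambda|^{-s} \leq 2^{s} \sum_{\lambda \neq \lambda_0} |\lambda - \lambda_0|^{-s}$, and since $\lambda_0 \in \Lambda$, translation invariance of $\Lambda$ rewrites this as the standard lattice sum $2^{s}\sum_{\mu \in \Lambda \setminus \{0\}} |\mu|^{-s}$, convergent for $s > 2$ by the same integral comparison used above.

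The only mildly delicate point is ensuring that the first estimate is uniform in $z$; this is handled automatically by the nearby/far dichotomy, so I expect no substantive obstacle. The whole lemma is essentially a routine packing-plus-integral-comparison argument, and the constants $c_1(\Lambda,s,d)$ and $c_2(\Lambda,s)$ can be read off from the bounds above.
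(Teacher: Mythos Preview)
Your argument is correct and is exactly the kind of ``direct estimation'' the paper has in mind; in fact the paper omits the proof entirely, writing only ``We can prove these results by direct estimation; we omit the detail.'' Your nearby/far split with a packing bound plus integral comparison is the standard way to carry this out, and your constants depend on the right data.
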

\begin{proof}
We can prove these results by direct estimation; we omit the detail.
\end{proof}
\begin{proof}[Proof of Proposition \ref{proposition: |df_a| < const}]
Set $f_a(z) = [1:f_1(z):f_2(z): \cdots :f_N(z)]$. From (\ref{eq: definition of |df|}), we have
\begin{equation*}
\pi |df_a|^2 = \frac{\sum |f_n'|^2 + \sum_{n<m} |f_n f_m' - f_n' f_m|^2}{(1+\sum |f_n|^2)^2}
\end{equation*}
Set 
\[ d := \min \left(\delta ,  \left(\frac{1}{4c_2(\Lambda, 3)}\right)^{1/3} \right) .\]
Suppose $z\in \affine$ satisfies $d(z, \Lambda) \geq d$. 
Then Lemma \ref{lemma: estimate of infinite sum} shows
\[ |f_n'(z)| \leq \frac{1}{\sqrt{N}}\sum_{\lambda\in\Lambda} \frac{6A}{|z-\lambda|^4}
\leq \frac{6A}{\sqrt{N}}c_1(\Lambda, 4, d) .\]
From this, we have
\[ \sum_n |f_n'(z)|^2 \leq \mathrm{const}_{\Lambda, A} .\]
Here $\mathrm{const}_{\Lambda, A}$ denotes a positive constant independent of $N$
(and depending on $\Lambda, A$).
In the same way, we have
\[ \sum_{n<m} |f_n' f_m - f_m' f_n|^2 \leq \mathrm{const}_{\Lambda, A} .\]
Hence 
\[ |df_a|(z) \leq \mathrm{const}_{\Lambda, A} \quad 
\text{for $z\in \affine$ with $d(z, \Lambda) \geq d$} .\]

Next suppose $z \in \affine$ satisfies $d(z, \Lambda) < d$. 
From $d\leq \delta$, there is a unique $\lambda_0\in \Lambda$ such that
$|z-\lambda_0| < d$. We suppose $\lambda_0 =0$ for simplicity, i.e. $|z|< d$.
We have
\[ f_a(z) = [z^3: \frac{a_{1 0}}{\sqrt{N}} + 
\frac{z^3}{\sqrt{N}} \sum' \frac{a_{1 \lambda}}{(z-\lambda)^3} : \cdots : 
\frac{a_{N 0}}{\sqrt{N}} + \frac{z^3}{\sqrt{N}} \sum' \frac{a_{N \lambda}}{(z-\lambda)^3}] .\]
where $\displaystyle\sum'$ denotes the sum over $\lambda \in \Lambda\setminus\{0\}$.
Set 
\[ g_n(z) := z^3 f_n(z) =
\frac{a_{n 0}}{\sqrt{N}} + \frac{z^3}{\sqrt{N}} \sum' \frac{a_{n \lambda}}{(z-\lambda)^3} .\]
Then we have
\[ \pi |df_a|^2(z) = \frac{\sum |z^3 g_n' - 3z^2 g_n|^2 + 
\sum_{n<m}|g_n g_m' -g_n' g_m|^2}{(|z|^6 + \sum |g_n|^2)^2} .\]
From $|z| < d$, $d^3 \leq 1/4c_2(\Lambda, 3)$ and Lemma \ref{lemma: estimate of infinite sum},
we have
\[|g_n(z)| \geq \frac{A}{\sqrt{N}} - \frac{d^3}{\sqrt{N}}\sum' \frac{2A}{|z-\lambda|^3}
\geq \frac{A}{\sqrt{N}} - \frac{2Ad^3}{\sqrt{N}}c_2(\Lambda, 3) \geq \frac{A}{2\sqrt{N}} .\]
Hence
\[ \sum |g_n(z)|^2 \geq \frac{A^2}{4} .\]
Therefore
\[ \pi |df_a|^2(z) \leq \frac{16}{A^4}
\left[\sum |z^3 g_n' - 3z^2 g_n|^2 + \sum_{n<m}|g_n g_m' -g_n' g_m|^2 \right] .\]
Then some calculation shows
\[ |df_a|(z) \leq \mathrm{const}_{\Lambda, A} \quad \text{for $|z|<d$} .\]
Thus we conclude that 
\[ |df_a|(z) \leq C(\Lambda, A) \quad \text{for all $z\in \affine$} .\]
\end{proof}
If we set $\Hat{f}_a(z) := f_a(z/c(\Lambda, A))$, then we have $|d\Hat{f}_a|(z)\leq 1$.
Therefore we get the following:
\begin{corollary}
There are $\Lambda$ and $A$ independent of $N$ such that
\[ |df_a| \leq 1 \quad \text{for all $a\in (\Omega^N)^\Lambda$}.\]
\end{corollary}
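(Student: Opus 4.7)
The plan is to obtain this corollary as an immediate rescaling consequence of Proposition \ref{proposition: |df_a| < const}, which already provides a uniform (in $N$) bound $|df_a|(z) \leq C(\Lambda, A)$. The only real work is bookkeeping: absorbing the constant $C(\Lambda, A)$ into a redefinition of the lattice and the annulus so that the rescaled curve is still of the form $f_b$ for data $b$ in the new $(\Omega^N)^\Lambda$.

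First I would fix an arbitrary auxiliary lattice $\Lambda_0 \subset \affine$ and number $A_0 > 0$, and set $c := C(\Lambda_0, A_0)$, the constant supplied by Proposition \ref{proposition: |df_a| < const}. For $a \in (\Omega_0^N)^{\Lambda_0}$ (with $\Omega_0 = \{A_0 \leq |w| \leq 2A_0\}$), define $\hat f_a(z) := f_a(z/c)$. By the chain rule,
\[
|d\hat f_a|(z) = c^{-1}|df_a|(z/c) \leq c^{-1} C(\Lambda_0, A_0) = 1
\]
for all $z\in\affine$. So the question is merely to recognize $\hat f_a$ as an $f_b$ in the notation of Section \ref{section: Discretization of holomorphic curves}, but with new parameters.

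To do this, I would substitute $\mu = c\lambda$ in the defining series for $f_a(z/c)$. The identity $(z/c-\lambda)^{-3} = c^3 (z-\mu)^{-3}$ turns
\[
\frac{1}{\sqrt{N}}\sum_{\lambda\in\Lambda_0}\frac{a_{n\lambda}}{(z/c-\lambda)^3}
=\frac{1}{\sqrt{N}}\sum_{\mu\in c\Lambda_0}\frac{c^3 a_{n,\mu/c}}{(z-\mu)^3},
\]
so $\hat f_a = f_b$ where $\Lambda := c\Lambda_0$, $A := c^3 A_0$, $\Omega := \{A \leq |w| \leq 2A\}$, and $b_{n\mu} := c^3 a_{n,\mu/c} \in \Omega$. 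Since $a \mapsto b$ is a bijection $(\Omega_0^N)^{\Lambda_0} \to (\Omega^N)^{\Lambda}$, the inequality $|df_b|\leq 1$ holds for every $b\in(\Omega^N)^\Lambda$, proving the corollary with this explicit pair $(\Lambda, A)$.

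There is no genuine obstacle here; the content is entirely in Proposition \ref{proposition: |df_a| < const}. The only thing to be careful about is that the rescaling is $N$-uniform, which is automatic because $c = C(\Lambda_0, A_0)$ is itself independent of $N$ by the proposition, and hence the resulting $\Lambda$ and $A$ are independent of $N$ as required.
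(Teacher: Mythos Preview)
Your proof is correct and follows the same rescaling idea as the paper: the paper simply observes that setting $\hat f_a(z) := f_a(z/C(\Lambda,A))$ gives $|d\hat f_a|\leq 1$ and states the corollary. You carry out the additional bookkeeping of explicitly identifying $\hat f_a$ as $f_b$ for the rescaled data $(\Lambda,A) = (c\Lambda_0, c^3 A_0)$, which the paper leaves implicit; this extra step is correct and harmless.
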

Hence we get the following map:
\[ F: (\Omega^N)^\Lambda \to \moduli (\affine P^N), \quad a\mapsto f_a .\]
$F$ is obviously injective and $\Lambda$-equivariant.
Moreover some calculation shows that $F$ is continuous (here we consider the product topology
on $(\Omega^N)^\Lambda$ and the compact-open topology on $\moduli(\affine P^N)$).
Hence $F$ is a topological embedding.
\begin{proof}[Proof of Theorem \ref{theorem: lower bound for the case of projective space}]
$\moduli(\affine P^N)$ contains a ``shift space" $(\Omega^N)^\Lambda$. Hence
\[ \dim(\moduli (\affine P^N): \Lambda) \geq \dim((\Omega^N)^\Lambda :\Lambda) = 2N. \]
Therefore (cf. Subsection 4.1)
\[ \dim(\moduli(\affine P^N):\affine ) = \dim(\moduli (\affine P^N): \Lambda) /|\affine /\Lambda|
\geq 2N /|\affine /\Lambda| .\]
Note that $\Lambda$ is independent of $N$. Hence this shows the theorem.
\end{proof}

%%%%%%%%%%%%%%%%%%%%%%%%%%%%%%%%%%%%%%%%%%%%%%%%%%%%%%%%%%%%%%%%%%%%%%%%%%%%%%%%%%%%%%%%%%%%%%%%%%%%%%%%%%%%%%%%%
%%%%%%%%%%%%%%%%%%%%%%%%%%%%%%%%%%%%%%%%%%%%%%%%%%%%%%%%%%%%%%%%%%%%%%%%%%%%%%%%%%%%%%%%%%%%%%%%%%%%%%%%%%%%%%%%%
%%%%%%%%%%%%%%%%%%%%%%%%%%%%%%%%%%%%%%%%%%%%%%%%%%%%%%%%%%%%%%%%%%%%%%%%%%%%%%%%%%%%%%%%%%%%%%%%%%%%%%%%%%%%%%%%%

\section{General theory of mean dimension} \label{section: General theory of mean dimension}
\subsection{Review of mean dimension}\label{subsection: Review of mean dimension}
We review the basic definitions of mean dimension given in Gromov \cite{Gromov} 
(see also Lindenstrauss-Weiss \cite{Lindenstrauss-Weiss}).
All results in this subsection are essentially given in \cite{Gromov, Lindenstrauss-Weiss}.
Let $(X, d)$ be a compact metric space and $Y$ a topological space.
For a positive number $\varepsilon >0$, a continuous map $f:X\to Y$ is called an 
$\varepsilon$-embedding if we have $\Diam (f^{-1}(y)) \leq \varepsilon$ for any point $y\in Y$.
We define $\Widim_\varepsilon (X, d)$ as the minimum number $n$ such that 
there exist an $n$-dimensional finite polyhedron $P$ and an $\varepsilon$-embedding $f:X\to P$.
Since $X$ is compact, $\Widim_\varepsilon (X, d) < \infty$. 
$\Widim_\varepsilon (X, d)$ is monotone non-decreasing as $\varepsilon \to 0$,
and we have
\[ \lim_{\varepsilon \downarrow 0} \Widim_\varepsilon (X, d) = \dim X \]
where $\dim X$ denotes the topological covering dimension of $X$ 
(of course $\dim X$ can be infinite).
The following is a fundamental example (see \cite[pp. 332-333]{Gromov} and \cite[Lemma 3.2]{Lindenstrauss-Weiss}).
\begin{example}\label{example: fundamental exmaple of Widim}
Let $d_\infty(\cdot, \cdot)$ be the sup-distance on $[0,1]^N$: $d_\infty(x, y)= \max_{i} |x_i -y_i|$.
Then
\[ \Widim_\varepsilon ([0,1]^N, d_\infty) =N  \quad \text{for any $\varepsilon <1$}.\]
The important point of this statement is that the estimate $\varepsilon <1$ is independent of $N$.
\end{example}
\begin{proof}
$[0,1]^N$ is itself a finite polyhedron of dimension $N$. Hence $ \Widim_\varepsilon ([0,1]^N, d_\infty) \leq N$ 
is obvious. Consider the constant sheaf $\mathbb{Z}$ on $[0,1]^N$, 
and we define the subsheaf $\mathcal{F}\subset \mathbb{Z}$ by
\[ \mathcal{F}_p = \mathbb{Z}_p \quad \text{for $p\in (0,1)^N$} \quad \text{and}\quad 
\mathcal{F}_p = 0 \quad \text{for $p\in \partial [0,1]^N$} .\] 
The \v{C}ech cohomology $\Check{H}^*([0,1]^N, \mathcal{F})$ is equal to the cohomology
$H^*([0, 1]^N, \partial [0,1]^N)$.
In particular 
\[ \Check{H}^N([0,1]^N, \mathcal{F}) = \mathbb{Z}.\]
Set $U_0 := [0,1)$ and $U_1 := (0,1]$, and we define the open covering $\mathcal{U} =\{U_{i_1 \cdots i_N}\}$ 
of $[0,1]^N$ by
\[ U_{i_1\cdots i_N} := U_{i_1}\times \cdots \times U_{i_N} \quad \text{for all $i_1, \cdots, i_N =0, 1$} .\]
$\mathcal{U}$ is acyclic for $\mathcal{F}$, and hence 
the natural map $\Check{H}^* (\mathcal{U}, \mathcal{F}) \to \Check{H}^*([0,1]^N, \mathcal{F})$ is isomorphic
(by Leray's theorem).

Suppose $\Widim_\varepsilon ([0,1]^N, d_\infty) \leq N-1$ for some $\varepsilon <1$.
Then there exists a open covering $\mathcal{V}$ of $[0,1]^N$ such that $\mathcal{V}$ is a refinement of $\mathcal{U}$ and
the order of $\mathcal{V}$ is $\leq N-1$, i.e., any intersection of $N+1$ open sets in $\mathcal{V}$ is empty.
Then the isomorphism $\Check{H}^N (\mathcal{U}, \mathcal{F}) \to \Check{H}^N([0,1]^N, \mathcal{F}) = \mathbb{Z}$ is equal to
the zero map:
\[ \Check{H}^N (\mathcal{U}, \mathcal{F}) \to \Check{H}^N(\mathcal{V}, \mathcal{F}) =0 \to \Check{H}^N([0,1]^N, \mathcal{F}).\]
This is a contradiction.
\end{proof}

Suppose that the additive group $\mathbb{Z}^k$ $(k\geq 1)$ acts on $X$.
For a finite subset $\Omega$ in $\mathbb{Z}^k$, we define the distance $d_\Omega(\cdot, \cdot)$ on $X$ by 
\begin{equation}\label{eq: definition of d_omega}
 d_\Omega (x, y) := \max_{\gamma \in \Omega} d(\gamma .x, \gamma .y) \quad \text{for $x, y \in X$}.
\end{equation}
$(X, d_\Omega)$ is homeomorphic to $(X, d)$. 
In particular, $(X, d_\Omega)$ is compact and $\Widim_\varepsilon(X, d_\Omega)$ can be defined.
For a positive integer $n$, we set $I_n := [0, n)^k \cap \mathbb{Z}^k$. 
The sequence $\{I_n\}_{n\geq 1}$ is amenable in $\mathbb{Z}^k$ (in the sense of \cite[p. 335]{Gromov}), 
and we can define $\Widim_\varepsilon (X: \mathbb{Z}^k)$ by
\[ \Widim_\varepsilon (X:\mathbb{Z}^k) := \lim_{n\to \infty}\frac{1}{n^k} \Widim_\varepsilon (X, d_{I_n}).\]
This limit always exists; see \cite[pp. 335-338]{Gromov} and \cite[Appendix]{Lindenstrauss-Weiss}.
$\Widim_\varepsilon (X:\mathbb{Z}^k)$ is monotone non-decreasing as $\varepsilon \to 0$,
and we define the mean dimension $\dim (X:\mathbb{Z}^k)$ by
\[ \dim (X: \mathbb{Z}^k) := \lim_{\varepsilon \downarrow 0} \Widim_\varepsilon (X:\mathbb{Z}^k) .\]
If $\dim X <\infty$, then $\Widim_\varepsilon (X:d_{I_n} ) \leq \dim X <\infty$ and 
$\Widim_\varepsilon(X:\mathbb{Z}^k) =0$. 
In particular (cf. Lindenstrauss-Weiss \cite[p. 6]{Lindenstrauss-Weiss})
\[ \dim(X:\mathbb{Z}^k ) = 0 \quad \text{for all finite dimensional $X$}. \]

If the Lie group $\mathbb{R}^k$ acts on $X$, we define $d_\Omega(\cdot,\cdot)$
for any bounded set $\Omega\subset \mathbb{R}^k$ by (\ref{eq: definition of d_omega}).
$(X, d_\Omega)$ is homeomorphic to $(X, d)$, and we can define $\Widim_\varepsilon (X:\mathbb{R}^k)$
and the mean dimension $\dim(X:\mathbb{R}^k)$ by 
\begin{equation*}
 \begin{split}
 \Widim_\varepsilon (X:\mathbb{R}^k) &:= \lim_{n\to \infty} \frac{1}{n^k} 
\Widim_\varepsilon (X, d_{[0, n)^k}), \\
 \dim (X:\mathbb{R}^k) &:= \lim_{\varepsilon \downarrow 0} \Widim_\varepsilon (X:\mathbb{R}^k),
 \end{split}
\end{equation*}
where $n^k$ is the volume of $[0, n)^k$.
Here we have considered only $\mathbb{Z}^k$ and $\mathbb{R}^k$. But actually we can consider much more 
general groups; see Gromov \cite{Gromov}.
\begin{remark}
In the above definitions we have chosen special ``amenable sequences" $\{I_n\}_{n\geq 1}$ and $\{[0, n)^k\}_{n\geq 1}$
for simplicity of the explanation.
Actually the value of mean dimension does not depend on the choice of amenable sequences 
(this is a very important point). See Gromov \cite[pp. 335-338]{Gromov} and 
Lindenstrauss-Weiss \cite[Appendix]{Lindenstrauss-Weiss}.
\end{remark}
\begin{remark}
The above definition of mean dimension uses a distance. But actually mean dimension is a topological invariant;
if $d'$ is another distance on $X$ such that $(X, d')$ is homeomorphic to $(X,d)$, then we have
\[ \dim((X, d'):\mathbb{Z}^k) = \dim((X, d):\mathbb{Z}^k) .\]
This can be (easily) proved by using the fact: 
the identity map $id: (X, d)\to (X, d')$ becomes uniformly continuous (by the compactness of $X$).
See Gromov \cite[p. 339]{Gromov}.
\end{remark}
\begin{example}\label{example: fundamental example of mean dimension}
Let $\underline{X}$ be a compact metric space of finite covering dimension and set $X := \underline{X}^{\mathbb{Z}^k}$.
$\mathbb{Z}^k$ acts on $X$ by
\[ \mathbb{Z}^k \times X \to X, \quad 
(\gamma,\, (x_a)_{a\in \mathbb{Z}^k}) \mapsto \gamma. (x_a)_{a\in \mathbb{Z}^k}  
= (x_{\gamma + a})_{a\in \mathbb{Z}^k} \]
We define the distance $d(x, y)$ for $x=(x_a)_{a\in \mathbb{Z}^k}$ and $y= (y_a)_{a\in \mathbb{Z}^k}$ in $X$ by
\begin{equation}\label{eq: definition of distance in X}
 d(x, y) := \sum_{a\in \mathbb{Z}^k} 2^{-|a|} d(x_a, y_a) \quad 
\text{where $|a| = |a_1| +\cdots +|a_k|$ for $a =(a_1, \cdots, a_k)$} . 
\end{equation}
Then $X$ becomes a compact metric space. The mean dimension of $X$ is estimated by 
(see Lindenstrauss-Weiss \cite[Proposition 3.1]{Lindenstrauss-Weiss}):
\[ \dim(X:\mathbb{Z}^k) \leq \dim \underline{X} .\]
In addition, if $\underline{X}$ has a structure of finite polyhedron, then 
(see \cite[Proposition 3.3]{Lindenstrauss-Weiss})
\[ \dim(X:\mathbb{Z}^k) = \dim \underline{X} .\]
\end{example}
\begin{proof}
Let $n, s>0$ be positive integers and set $J:= (-s, n+s)^k \cap \mathbb{Z}^k$.
Let $\pi:X\to \underline{X}^J$ be the natural projection.
Some calculation shows that if $x , y\in X$ satisfy $\pi (x) = \pi (y)$, then 
\[ d_{I_n}(x, y) \leq C_{k, \underline{X}}\, 2^{-s},\]
where $C_{k, \underline{X}}$ is a positive constant depending on $k$ and $\Diam \underline{X}$.
For any $\varepsilon >0$, let $s$ be an integer satisfying $C_{k, \underline{X}}\, 2^{-s} <\varepsilon$.
(Note that we can take $s$ independent of $n$.)
Then if $\pi(x) = \pi (y)$, we have $d_{I_n}(x, y) <\varepsilon$.

The covering dimension of $\underline{X}^J$ is $\leq |J|\dim\underline{X}$.
Then for any $\delta>0$ there are a finite polyhedron $P$ of dimension $\leq |J|\dim\underline{X}$ and a
$\delta$-embedding $f:\underline{X}^J\to P$.
(Here we consider a distance on $\underline{X}^J$; the choice of the distance is not important.)
If we take $\delta$ sufficiently small, then the map $f\circ \pi :(X, d_{I_n})\to P$ becomes an $\varepsilon$-embedding.
Hence 
\[ \Widim_\varepsilon (X, d_{I_n}) \leq |J|\dim\underline{X} = (n+ 2s-1)^k\dim\underline{X}  .\]
Since $s$ is independent of $n$, we have
\[ \Widim_\varepsilon (X:\mathbb{Z}^k) = \lim_{n\to \infty} n^{-k} \Widim_\varepsilon (X, d_{I_n}) 
\leq \lim_{n\to \infty}(1 + \frac{2s-1}{n})^k\dim\underline{X} = \dim\underline{X}. \]
Therefore $\dim(X:\mathbb{Z}^k) \leq \dim\underline{X}$.

Next we suppose that $\underline{X}$ is a finite polyhedron of dimension $N$.
We want to prove $\dim(X:\mathbb{Z}^k) \geq N$.
There is a topological embedding $[0,1]^N \hookrightarrow \underline{X}$, 
and this induces a $\mathbb{Z}^k$-equivariant embedding $([0,1]^N)^{\mathbb{Z}^k} \hookrightarrow X$.
Hence it is enough to prove $\dim(([0,1]^N)^{\mathbb{Z}^k}:\mathbb{Z}^k) \geq N$.
Mean dimension is a topological invariant. 
Hence we can use any distance on $([0,1]^N)^{\mathbb{Z}^k}$. 
Here we consider the sup-distance $d_\infty$ on $[0,1]^N$ as in Example \ref{example: fundamental exmaple of Widim},
and we define the distance on $([0,1]^N)^{\mathbb{Z}^k}$ by (\ref{eq: definition of distance in X}):
\[ d(x, y) := \sum_{a\in \mathbb{Z}^k} 2^{-|a|} d_\infty (x_a, y_a). \]

Let $\iota: [0,1]^{N|I_n|} = ([0,1]^N)^{I_n} \hookrightarrow ([0,1]^N)^{\mathbb{Z}^k}$ 
be the embedding defined by
\[ \iota: (x_a)_{a\in I_n} \mapsto (y_a)_{a\in \mathbb{Z}^k} \quad 
\text{where $y_a = x_a \in [0,1]^N$ for $a\in I_n$ and $y_a =0$ for $a\not\in \mathbb{Z}^k$}. \]
By the definition of the distance $d_{I_n}$, we have
\[ d_\infty(x, y) \leq d_{I_n}( \iota(x), \iota(y) ) \quad \text{for $x, y\in [0,1]^{N|I_n|}$}.\]
Then, for any $\varepsilon <1$, 
\[ \Widim_\varepsilon (([0,1]^N)^{\mathbb{Z}^k}, d_{I_n}) \geq \Widim_\varepsilon ([0,1]^{N|I_n|}, d_\infty ) 
= N |I_n| .\]
Therefore
\[ \Widim_\varepsilon (([0,1]^N)^{\mathbb{Z}^k}:\mathbb{Z}^k) = 
\lim_{n\to \infty} n^{-n}\Widim_\varepsilon (([0,1]^N)^{\mathbb{Z}^k}, d_{I_n}) \geq N .\]
This shows
\[\dim(([0,1]^N)^{\mathbb{Z}^k}:\mathbb{Z}^k) \geq N .\]
\end{proof}
In the proof of Theorem \ref{theorem: upper bound for dim(M(X):C)} we used the following proposition 
(cf. Gromov \cite[p. 329]{Gromov} and Lindenstrauss-Weiss \cite[Proposition 2.7]{Lindenstrauss-Weiss}).
\begin{proposition}
Let $(X,d)$ be a compact metric space acted by the Lie group $\mathbb{R}^k$, and let $\Lambda \subset \mathbb{R}^k$
be a lattice. Then 
\[ \dim(X:\Lambda) = |\mathbb{R}^k/\Lambda|\dim(X:\mathbb{R}^k) ,\]
where $|\mathbb{R}^k/\Lambda|$ denotes the volume of the fundamental domain of $\Lambda$ in $\affine$.
\end{proposition}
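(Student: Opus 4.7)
The plan is to prove the two inequalities $\dim(X:\Lambda) \leq |\mathbb{R}^k/\Lambda|\dim(X:\mathbb{R}^k)$ and $\dim(X:\Lambda) \geq |\mathbb{R}^k/\Lambda|\dim(X:\mathbb{R}^k)$ separately, by comparing $\Widim_\varepsilon$ computed with the two different distance families (the lattice max and the Euclidean max). Throughout I will use the F\o{}lner sequence $F_n := \Lambda \cap [0,n)^k$ for $\Lambda$, which satisfies $|F_n|/n^k \to 1/|\mathbb{R}^k/\Lambda|$ by a standard lattice point count, together with the fact (recalled in Subsection 4.1) that mean dimension is independent of the choice of F\o{}lner sequence.

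For the upper bound, observe that the distance $d_{F_n}(x,y) = \max_{\lambda \in F_n} d(\lambda .x, \lambda .y)$ used in $\Widim_\varepsilon (X:\Lambda)$ is a maximum over a subset of the points entering $d_{[0,n)^k}(x,y) = \max_{\gamma \in [0,n)^k} d(\gamma .x, \gamma .y)$. Hence $d_{F_n} \leq d_{[0,n)^k}$, so every $\varepsilon$-embedding for the latter is automatically an $\varepsilon$-embedding for the former, giving $\Widim_\varepsilon (X, d_{F_n}) \leq \Widim_\varepsilon (X, d_{[0,n)^k})$. Dividing by $|F_n|$, writing $|F_n|^{-1} = (n^k)^{-1} \cdot (n^k / |F_n|)$, and passing to the limits $n \to \infty$ then $\varepsilon \to 0$ yields the first inequality with the factor $|\mathbb{R}^k/\Lambda|$ coming from the density ratio.

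For the reverse inequality, let $D$ denote the diameter of a fundamental domain for $\Lambda$. Since $X$ is compact and the action is continuous, the restriction of the action to $\overline{B_D(0)} \times X$ is uniformly continuous; thus for each $\varepsilon > 0$ there is $\delta > 0$ such that $d(x,y) < \delta$ forces $d(a.x, a.y) < \varepsilon$ for every $a$ with $|a| \leq D$. Any $\gamma \in [0,n)^k$ decomposes as $\gamma = \lambda + a$ with $\lambda \in \Lambda$ and $|a| \leq D$, whence $\lambda \in J_n := \Lambda \cap [-D, n+D)^k$. If $d_{J_n}(x,y) < \delta$, then $d(\gamma .x, \gamma .y) = d(a.(\lambda .x), a.(\lambda .y)) < \varepsilon$ for every such $\gamma$, so any $\delta$-embedding with respect to $d_{J_n}$ is an $\varepsilon$-embedding with respect to $d_{[0,n)^k}$. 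Hence $\Widim_\varepsilon (X, d_{[0,n)^k}) \leq \Widim_\delta (X, d_{J_n})$; dividing by $n^k$, using $|J_n|/n^k \to 1/|\mathbb{R}^k/\Lambda|$ together with F\o{}lner-sequence independence so that $\{J_n\}$ also computes $\Widim_\delta (X:\Lambda)$, and letting $\varepsilon \to 0$ (forcing $\delta \to 0$) gives the second inequality.

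The main technical obstacle is tracking the boundary effects cleanly: the thickening from $[0,n)^k$ to $[-D,n+D)^k$ and the corresponding change in lattice point counts are both $O(n^{k-1})$ and become negligible after normalization by $n^k$, but this is precisely where amenable-sequence independence of mean dimension is essential. Once that ingredient is granted, the entire argument reduces to the two metric comparisons sketched above, one trivial and one obtained from uniform continuity of the compact-on-compact action.
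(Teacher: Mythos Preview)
Your argument is correct and follows essentially the same two-inequality strategy as the paper: the trivial metric comparison $d_{F_n}\le d_{[0,n)^k}$ for one direction, and uniform continuity of the action over a fundamental domain for the other. The only cosmetic difference is that the paper treats just the case $\Lambda=\mathbb{Z}^k$ explicitly, where the exact tiling $[0,n)^k=\bigsqcup_{u\in I_n}\bigl(u+[0,1)^k\bigr)$ lets one use $I_n$ itself rather than your thickened set $J_n$, and then remarks that other lattices are handled ``by using different amenable sequences''; your version carries out the general case directly at the cost of the harmless boundary correction you note.
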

\begin{proof}
We give the proof for the case of $\Lambda = \mathbb{Z}^k \subset \mathbb{R}^k$.
Other cases can be proved in the same way (by using different amenable sequences).

Since $d_{I_n}(\cdot, \cdot) \leq d_{[0, n)^k}(\cdot, \cdot)$, 
we have $\Widim_\varepsilon (X, d_{I_n}) \leq \Widim_\varepsilon (X, d_{[0, n)^k})$. Hence
\[ \dim(X:\mathbb{Z}^k )\leq \dim(X:\mathbb{R}^k) .\]
On the other hand, since the identity map $id: (X, d) \to (X, d_{[0, 1)^k})$ is uniformly continuous,
for any $\varepsilon >0$ there exists $\delta = \delta(\varepsilon) >0$ such that
\[ d(x, y) \leq \delta \Rightarrow d_{[0, 1)^k}(x, y)\leq \varepsilon \quad
\text{for any two $x, y\in X$}.\]
Note that 
\[ [0, n)^k = \bigsqcup_{u\in I_n} \{u+ [0, 1)^k\} .\]
Then 
\[ d_{I_n}(x, y) \leq \delta \Rightarrow d_{[0, n)^k}(x, y) \leq \varepsilon \quad
\text{for any two $x, y\in X$}.\]
Therefore
\[ \Widim_\epsilon (X,d_{[0, n)^k}) \leq \Widim_\delta (X, d_{I_n}) .\]
Hence 
\[ \Widim_\varepsilon (X:\mathbb{R}^k) \leq \Widim_\delta(X:\mathbb{Z}^k) \leq \dim(X:\mathbb{Z}^k). \]
Thus
\[ \dim(X:\mathbb{R}^k) \leq \dim(X:\mathbb{Z}^k) .\]
\end{proof}

%%%%%%%%%%%%%%%%%%%%%%%%%%%%%%%%%%%%%%%%%%%%%%%%%%%%%%%%%%%%%%%%%%%%%%%%%%%%%%%%%%%%%%%%%%%%%%%%%%%%%%%%%%%%%%%%%%%%%
%%%%%%%%%%%%%%%%%%%%%%%%%%%%%%%%%%%%%%%%%%%%%%%%%%%%%%%%%%%%%%%%%%%%%%%%%%%%%%%%%%%%%%%%%%%%%%%%%%%%%%%%%%%%%%%%%%%%%
\subsection{Some general results on mean dimension}
The proof of Theorem \ref{theorem: dim(M:C) = dim(M_+:C)} needs the following theorem.
\begin{theorem}\label{theorem: pre-fiber theorem}
Let $(X, d)$ and $(Y, d')$ be compact metric spaces acted by the additive group $\mathbb{Z}^k$, and
let $f:X\to Y$ be a $\mathbb{Z}^k$-equivariant continuous map.
Suppose that there exists a $\mathbb{Z}^k$-invariant closed subset $A$ in $X$ such that 
$f|_{X\setminus A}$ is injective. Then we have
\[ \dim(X:\mathbb{Z}^k) \leq \dim(Y:\mathbb{Z}^k) + \dim(A:\mathbb{Z}^k) .\]
\end{theorem}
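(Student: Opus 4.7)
Fix $\varepsilon > 0$. The strategy is to build, for each $n$, an $\varepsilon$-embedding of $(X, d_{I_n})$ into a finite polyhedron whose dimension is bounded above by $\Widim_\delta(Y, d'_{I_n}) + \Widim_{\varepsilon/2}(A, d_{I_n}) + O(1)$, with $\delta = \delta(\varepsilon) > 0$ independent of $n$. Dividing by $n^k$, sending $n \to \infty$, and then letting $\varepsilon, \delta \to 0$ will produce the stated mean-dimension inequality.

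The first task is to turn the qualitative injectivity of $f|_{X \setminus A}$ into a uniform quantitative statement: for each $r > 0$, the set $X_r := \{x \in X : d(x, A) \geq r\}$ is compact and $f|_{X_r}$ is a homeomorphism onto its image, so by uniform continuity of its inverse there exists $\delta = \delta(\varepsilon, r)$ such that $x, x' \in X_r$ with $d'(f(x), f(x')) < \delta$ forces $d(x, x') < \varepsilon$. I would then pick a $\delta$-embedding $\beta : Y \to P_Y$ realizing $\Widim_\delta(Y, d'_{I_n})$ and an $\varepsilon/2$-embedding $\alpha : A \to P_A$ realizing $\Widim_{\varepsilon/2}(A, d_{I_n})$, and extend $\alpha$ continuously to $\tilde\alpha : X \to C(P_A)$ (the cone on $P_A$) via a Urysohn cutoff supported in a neighborhood of $A$, so that $\tilde\alpha$ restricts to $\alpha$ on $A$ and sends points far from $A$ to the cone point (using that $P_A$ is an ANR and $A$ is closed). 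The combined map $F(x) := (\beta(f(x)), \tilde\alpha(x)) \in P_Y \times C(P_A)$ is the candidate $\varepsilon$-embedding, of the right target dimension.

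Checking that $F$ is indeed an $\varepsilon$-embedding reduces to a case analysis when $F(x) = F(x')$: either both $x, x'$ lie in a neighborhood of $A$, in which case $\alpha$ (together with the cutoff) controls $d_{I_n}(x, x')$; or both lie outside, in which case the quantitative injectivity from the first step, applied componentwise over $\gamma \in I_n$ using $\mathbb{C}$-equivariance of $f$ and invariance of $A$, controls $d_{I_n}(x, x')$ via $\beta \circ f$. The main obstacle is that the metric $d$ is not assumed $\mathbb{Z}^k$-invariant, so a point $x$ far from $A$ in $d$ may have translates $\gamma x$ arbitrarily close to $A$; the sets $X_r$ are not $\mathbb{Z}^k$-invariant. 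I would work around this by replacing the naive neighborhood of $A$ used in the cutoff by its $I_n$-saturation $\bigcup_{\gamma \in I_n} \gamma^{-1}\{d(\cdot, A) < r\}$, which remains a genuine neighborhood of $A$ by invariance of $A$, and by constructing the cutoff function symmetrized over $I_n$. The additive cost to the dimension bound is $O(1)$, negligible after dividing by $n^k$, and the case analysis then goes through uniformly over $\gamma \in I_n$, yielding the claimed inequality.
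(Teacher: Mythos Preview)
Your overall framework—combine a $\delta$-embedding of $Y$ with a cone extension of an $\varepsilon/2$-embedding of $A$—matches the paper's, but the single-scale implementation has a genuine gap in the case analysis. Suppose $F(x)=F(x')$. Even with the cutoff chosen so that $\rho^{-1}(1)=A$, the equality $\tilde\alpha(x)=\tilde\alpha(x')$ only separates the cases ``both in $A$'' versus ``both in $X\setminus A$''. In the second case you obtain $d'_{I_n}(f(x),f(x'))\le\delta$, hence $d'(f(\gamma x),f(\gamma x'))\le\delta$ for every $\gamma\in I_n$; but your quantitative inverse applies only when $\gamma x,\gamma x'\in X_r$, and nothing prevents some translate $\gamma x$ from lying arbitrarily close to $A$. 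Saturating the neighborhood does not fix this: the ANR extension $\tilde\alpha$ is an $\varepsilon/2$-embedding only on $A$ itself, not on any neighborhood of $A$, so the region ``both outside $A$ but with an $I_n$-translate near $A$'' is controlled by neither tool. Symmetrizing the cutoff over $I_n$ cannot help either, since the ANR extension is only defined on a fixed neighborhood $U$ of $A$, which the saturated set need not respect.

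The paper closes this gap with a \emph{two-scale} argument. Fix an auxiliary integer $m$ and build $g=(d(\cdot,A),\,j):X\to T$ from an $\varepsilon/2$-embedding of $(A,d_{I_m})$ (not $d_{I_n}$). The pair $(f,g):(X,d_{I_m})\to Y\times T$ is then an honest $\varepsilon/2$-embedding, because in the ``both outside $A$'' case one has the \emph{exact} equality $f(x_1)=f(x_2)$, forcing $x_1=x_2$ by injectivity—no quantitative inverse is needed. Compactness now yields $\beta=\beta(m,\varepsilon)$ (independent of $n$) such that $d'(f(x_1),f(x_2))\le\beta$ together with $g(x_1)=g(x_2)$ implies $d_{I_m}(x_1,x_2)\le\varepsilon$. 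For general $n$, tile $I_n$ by $I_m$-blocks indexed by a grid $\Gamma$ of cardinality $\approx(n/m)^k$ and use $\Pi(x)=\bigl(\pi(f(x)),\,(g(\gamma x))_{\gamma\in\Gamma}\bigr)$ with $\pi$ a $\beta$-embedding of $(Y,d'_{I_n})$. The cost is $|\Gamma|\dim T$ rather than your hoped-for $\dim T+O(1)$; after dividing by $n^k$ and sending $n\to\infty$ this becomes $m^{-k}\dim T$, which tends to $\Widim_{\varepsilon/2}(A:\mathbb{Z}^k)$ as $m\to\infty$. The decoupling of $m$ from $n$ is precisely what makes $\beta$ uniform in $n$ and the limit tractable.
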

\begin{proof}\footnote{The idea of this proof was inspired by the arguments in Robinson \cite[pp. 384-386]{Robinson}
 and Bowen \cite[Theorem 17]{Bowen}.}
Let $m$ be a positive integer and $\varepsilon$ be a positive number.
Let $i :(A, d_{I_m}) \to P$ be an $\varepsilon/2$-embedding with a 
$\Widim_{\varepsilon /2} (A, d_{I_m})$-dimensional finite polyhedron $P$.
Since a finite polyhedron is ANR (absolute neighborhood retract), there exist
a open set $U \supset A$ in $X$ and a continuous map $\tilde{i}:U\to P$ with $\tilde{i}|_A = i$.
Let $\rho: X \to [0,1]$ be a cut-off function on $X$ such that $\rho = 1$ on $A$ and $\mathrm{supp}(\rho) \subset U$.
Then we can define a continuous map $j$ from $X$ to the cone $C(K) := \{pt\} * K$ 
(the join of K and the one-point space $\{pt\}$) by $j(x) := (1-\rho(x))pt + \rho(x)\tilde{i}(x)$. 
Then we have the commutative diagram:
\begin{equation*}
 \begin{CD}
 A @>{i}>> K\\
 @VVV @VVV \\
 X @>{j}>> C(K)
 \end{CD}
\end{equation*}
We set $T := [0,\, \mathrm{Diam}(X, d)]\times C(K)$ and define the continuous map $g$ from $X$ to $T$ by 
\[ g: X \to T, \quad x \mapsto (d(x, A),\, j(x)).\]
Then the map $(f, g): (X, d_{I_m}) \to Y \times T$ becomes an $\varepsilon/2$-embedding because 
$f$ is injective on $X\setminus A$ and $g|_A :(A, d_{I_m}) \to T$ is an $\varepsilon/2$-embedding.
(Note that $g(A)$ and $g(X\setminus A)$ have no intersection: $g(A) \cap g(X\setminus A) = \emptyset$.)
Then there exists a positive number $\beta = \beta(m, \varepsilon)$ such that 
if two points $x_1$ and $x_2$ in $X$ satisfy $d'(f(x_1), f(x_2)) \leq \beta$ and $g(x_1) = g(x_2)$ then
$d_{I_m}(x_1, x_2) \leq \varepsilon$.

Let $n$ be a positive integer and define the positive integer $l$ by 
\begin{equation} \label{definition of l}
m(l-1) < n \leq ml.
\end{equation} 
We define the subset $\Gamma$ in $I_n$ by 
\[ \Gamma := \{( ma_1, ma_2, \cdots ,ma_k)\in m\mathbb{Z}^k|\, a_1, a_2, \cdots ,a_k \in \mathbb{Z}\, \text{ and } 
0 \leq a_1, a_2, \cdots, a_k\leq l-1 \} \]
From (\ref{definition of l}), we have 
\begin{equation}\label{partition of distance}
d_{I_n}(x_1, x_2) \leq \max_{\gamma\in \Gamma } d_{I_m}(\gamma.x_1,  \gamma.x_2 ) .
\end{equation}
Let $\pi :(Y, d'_{I_n}) \to Q$ be a $\beta$-embedding 
with a $\Widim_{\beta}(Y, d'_{I_n})$-dimensional finite polyhedron $Q$.
Define $\Pi :(X, d_{I_n}) \to Q \times T^\Gamma$ by 
\[ \Pi (x) := (\pi(f(x)),\, (g(\gamma.x ))_{\gamma\in \Gamma}) .\]
Suppose that two points $x_1$ and $x_2$ in $X$ satisfy $\Pi (x_1) = \Pi (x_2)$.
Then we have $g(\gamma. x_1) = g(\gamma. x_2)$ for all $\gamma\in \Gamma$
and $d'_{I_n}(f(x_1), f(x_2)) \leq \beta$. 
In particular, $d'(f(\gamma. x_1), f(\gamma. x_2))\leq \beta$ for all $\gamma \in \Gamma$.
From the definition of $\beta$, this implies $d_{I_m}(\gamma. x_1, \gamma. x_2) \leq \varepsilon$
for all $\gamma \in \Gamma$. 
Then (\ref{partition of distance}) shows
\[ d_{I_n}(x_1, x_2) \leq \varepsilon .\]
Thus $\Pi :(X, d_{I_n}) \to Q \times T^\Gamma$ is an $\varepsilon$-embedding.
The image space $Q \times T^\Gamma$ is a polyhedron. Hence
\begin{equation*}
 \begin{split}
 \frac{1}{n^k}\Widim_\varepsilon (X, d_{I_n}) &\leq \frac{1}{n^k}\dim(Q \times T^\Gamma ), \\
 & = \frac{1}{n^k}\Widim_\beta (Y, d'_{I_n}) + \frac{|\Gamma|}{n^k}\dim T,\\
 & \leq \frac{1}{n^k}\Widim_\beta (Y, d'_{I_n}) + (1/n + 1/m)^k \dim T.
\end{split}
\end{equation*}
Let $n$ go to infinity. Then we get
\begin{equation*}
 \begin{split}
 \Widim_\varepsilon (X:\mathbb{Z}^k) &\leq \Widim_\beta (Y: \mathbb{Z}^k) + m^{-k} \dim T, \\
 &\leq \dim(Y:\mathbb{Z}^k) + m^{-k} (\Widim_{\varepsilon/2} (A, d_{I_m}) + 2).
 \end{split}
\end{equation*}
Here we have used the fact: $\Widim_\beta (Y: \mathbb{Z}^k) \leq \dim(Y:\mathbb{Z}^k)$.
Let $m$ go to infinity. Then
\[ \Widim_\varepsilon (X:\mathbb{Z}^k) \leq \dim(Y:\mathbb{Z}^k) + \Widim_{\varepsilon/2} (A:\mathbb{Z}^k).\]
Let $\varepsilon$ go to $0$. Then we get the conclusion:
\[ \dim (X:\mathbb{Z}^k) \leq \dim (Y:\mathbb{Z}^k) + \dim (A:\mathbb{Z}^k) .\]
\end{proof}
\begin{remark}
For a general closed subset $A$ in $X$ (not necessarily $\mathbb{Z}^k$-invariant), we define 
$\dim (A:\{I_n\})$ by 
\[ \dim (A:\{I_n\}) := \lim_{\varepsilon\downarrow 0} 
\left(\liminf_{n\to \infty} \frac{1}{n^k} \Widim_\varepsilon (A, d_{I_n})\right).\]
(For the detail, see Gromov \cite[pp. 338-339]{Gromov}.)
Then the above proof shows the following result:
Let $(X, d)$ and $(Y,d')$ be compact metric spaces acted by $\mathbb{Z}^k$, and let $f:X\to Y$ be a 
$\mathbb{Z}^k$-equivariant continuous map. 
Suppose that there exists a closed subset $A$ in $X$ such that 
$f|_{X\setminus A}$ is injective. Then we have
\[ \dim(X:\mathbb{Z}^k) \leq \dim(Y:\mathbb{Z}^k) + \dim(A:\{I_n\}).\]
\end{remark}
\begin{problem}
I don't know whether the following statement is true or not (it might be too naive):
Let $(X, d)$ and $(Y,d')$ be compact metric spaces acted by $\mathbb{Z}^k$, and let $f:X\to Y$ be a 
$\mathbb{Z}^k$-equivariant continuous map. Then we have
\[ \dim(X:\mathbb{Z}^k) \leq \dim(Y:\mathbb{Z}^k) + \sup_{y\in Y}\{ \dim(f^{-1}(y):\{I_n \})\} .\]
\end{problem}
\begin{proof}[Proof of Theorem \ref{theorem: dim(M:C) = dim(M_+:C)}]
From $\moduli_+ \subset \moduli$, we have $\dim(\moduli_+:\affine ) \leq \dim(\moduli:\affine)$.
The reverse inequality is the problem.
Let $\Lambda\subset \affine$ be an arbitrary lattice, and we consider the discretization map
(cf. Section 2):
\[
D: \moduli \to (\affine P^N)^\Lambda , \quad f\mapsto f|_{\Lambda} .
\]
Since $e(f) = 0$ for all $f\in \moduli \setminus \moduli_+$, Lemma \ref{lemma: discretization 1} implies
that $D|_{\moduli\setminus \moduli_+}$ is injective.
Then we can apply Theorem \ref{theorem: pre-fiber theorem} to this situation, and we have
\[ \dim(\moduli:\Lambda ) \leq \dim(\moduli_+:\Lambda) + \dim((\affine P^N)^\Lambda :\Lambda) .\]
This means
\[ |\affine /\Lambda| \dim(\moduli:\affine) \leq |\affine /\Lambda| \dim(\moduli_+:\affine) + 2N.\]
We can let $|\affine /\Lambda|$ go to infinity. Hence
\[ \dim(\moduli:\affine) \leq  \dim(\moduli_+:\affine).\]
\end{proof}
Next proposition will be used in the proof of Theorem \ref{theorem: dim(M(X):C) = dim(M(Y):C)}.
\begin{proposition}\label{proposition: dim(X:Z^k) = dim(Y:Z^k)}
Let $X$ be a compact metric space acted by $\mathbb{Z}^k$ and $Y\subset X$ 
be a $\mathbb{Z}^k$-invariant closed subset in $X$.
Suppose that the complement $Y^c = X\setminus Y$ has a finite covering dimension. Then
\[ \dim(X:\mathbb{Z}^k) = \dim(Y:\mathbb{Z}^k) .\]
\end{proposition}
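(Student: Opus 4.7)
The inequality $\dim(Y:\mathbb{Z}^k) \leq \dim(X:\mathbb{Z}^k)$ is immediate from the monotonicity of $\Widim_\varepsilon$ (any $\varepsilon$-embedding of $(X,d_{I_n})$ restricts to one of $(Y,d_{I_n})$), so the task is the reverse. My plan is to imitate the cone-and-cutoff construction used in the proof of Theorem \ref{theorem: pre-fiber theorem}: for each positive integer $m$ and each $\varepsilon>0$ I will build a continuous map $F:(X,d_{I_m})\to Q\times C(P)$, with $Q$ and $P$ finite polyhedra, that is an $\varepsilon$-embedding and whose target has dimension at most $\Widim_{\varepsilon/4}(Y,d_{I_m}) + D + 1$, where $D := \dim Y^c < \infty$. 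Dividing by $m^k$ and letting $m\to\infty$ kills the additive $D+1$, and then $\varepsilon\downarrow 0$ yields $\dim(X:\mathbb{Z}^k)\leq\dim(Y:\mathbb{Z}^k)$.

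Fix $m$ and $\varepsilon$. Take an $\varepsilon/4$-embedding $\psi:(Y,d_{I_m})\to Q$ into a polyhedron of dimension $\Widim_{\varepsilon/4}(Y,d_{I_m})$; since $Q$ is an ANR and $Y$ is closed in $X$, extend $\psi$ to a continuous map $\tilde\psi:X\to Q$. The $\varepsilon/4$-embedding property is stable on the compact space $(Y,d_{I_m})$, so there is $\delta>0$ with $d_Q(\psi(y_1),\psi(y_2))<\delta$ implying $d_{I_m}(y_1,y_2)\leq \varepsilon/2$. Combining this with uniform continuity of $\tilde\psi$ on $(X,d_{I_m})$, I can choose $\eta\in(0,\varepsilon/4]$ so small that any two $x_1,x_2\in X$ with $d_{I_m}(x_i,Y)<\eta$ and $\tilde\psi(x_1)=\tilde\psi(x_2)$ automatically satisfy $d_{I_m}(x_1,x_2)\leq\varepsilon$. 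Put $U:=\{x:d_{I_m}(x,Y)<\eta\}$, $V:=\{x:d_{I_m}(x,Y)>\eta/2\}$, and $K:=\{x:d_{I_m}(x,Y)\geq\eta\}$, so that $K\subset V$, $X\setminus V\subset U$, and $X\setminus K=U$; moreover $K$ is compact, disjoint from $Y$, hence contained in $Y^c$ with $\dim K\leq D$. Fix an $\varepsilon$-embedding $\phi:(K,d_{I_m})\to P$ into a polyhedron $P$ of dimension $\leq D$.

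Extend $\phi$ to $\tilde\phi:V\to P$ by the ANR property of $P$, pick a Urysohn-type cutoff $\rho:X\to[0,1]$ with $\rho^{-1}(1)=K$ and $\rho^{-1}(0)=X\setminus V$, and define $\Phi:X\to C(P)=\{pt\}*P$ by $\Phi(x)=(\tilde\phi(x),\rho(x))$ for $x\in V$ and $\Phi(x)=pt$ otherwise. Set $F:=(\tilde\psi,\Phi):X\to Q\times C(P)$. If $F(x_1)=F(x_2)$, matching $\Phi$-coordinates forces $\rho(x_1)=\rho(x_2)$, which gives either both $x_i\in K$ (so $\phi(x_1)=\phi(x_2)$ and the $\varepsilon$-embedding property of $\phi$ concludes) or both $x_i\in X\setminus K=U$ (so $d_{I_m}(x_i,Y)<\eta$ together with $\tilde\psi(x_1)=\tilde\psi(x_2)$ and the choice of $\eta$ concludes). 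Thus $\Widim_\varepsilon(X,d_{I_m})\leq\dim(Q\times C(P))\leq\Widim_{\varepsilon/4}(Y,d_{I_m}) + D + 1$, and the limit-taking described above finishes the argument.

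The main technical obstacle is the $U$-case of the embedding verification, namely showing that two points of $U$ with equal $\tilde\psi$-image are $\varepsilon$-close in $d_{I_m}$. The argument picks $y_i\in Y$ with $d_{I_m}(x_i,y_i)<\eta$, uses uniform continuity of $\tilde\psi$ on $(X,d_{I_m})$ to move $\tilde\psi(x_1)=\tilde\psi(x_2)$ to the smallness of $d_Q(\psi(y_1),\psi(y_2))$, invokes the stability of $\psi$ to get $d_{I_m}(y_1,y_2)\leq\varepsilon/2$, and closes by the triangle inequality $d_{I_m}(x_1,x_2)\leq 2\eta+\varepsilon/2$. Hence $\eta$ must simultaneously serve as a modulus-of-continuity constant for $\tilde\psi$ and satisfy $\eta\leq\varepsilon/4$; both are easily arranged for each fixed $m$, which is all that the subsequent $m\to\infty$ limit requires.
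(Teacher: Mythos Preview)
Your argument works and takes a different route than the paper. The paper proves this proposition in two lines as a corollary of Theorem~\ref{theorem: pre-fiber theorem}: it forms the quotient $X/Y$ (collapsing $Y$ to a point), observes that $X/Y$ is metrizable and finite-dimensional (being a countable union of closed sets homeomorphic to closed subsets of $Y^c$, together with one point), hence $\dim(X/Y:\mathbb{Z}^k)=0$; the projection $\pi:X\to X/Y$ is $\mathbb{Z}^k$-equivariant with $\pi|_{Y^c}$ injective, so Theorem~\ref{theorem: pre-fiber theorem} gives $\dim(X:\mathbb{Z}^k)\le\dim(X/Y:\mathbb{Z}^k)+\dim(Y:\mathbb{Z}^k)=\dim(Y:\mathbb{Z}^k)$. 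You instead inline a tailored version of the cone-and-cutoff construction directly at the level of $\Widim_\varepsilon$, bypassing both the quotient space and the appeal to the general theorem. The paper's approach is shorter and exhibits the proposition as a formal consequence of Theorem~\ref{theorem: pre-fiber theorem}; yours is more explicit and avoids having to verify the topological properties of $X/Y$.

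One technical slip to flag: you write ``since $Q$ is an ANR and $Y$ is closed in $X$, extend $\psi$ to a continuous map $\tilde\psi:X\to Q$,'' but the ANR property only guarantees an extension over some open neighbourhood $W\supset Y$, not over all of $X$ (global extension would require $Q$ to be an AR, which a general finite polyhedron is not). The same issue arises when you extend $\phi$ to all of $V$. Both are easily repaired: choose $\eta$ so small that $\overline U\subset W$, and replace $\tilde\psi$ by a map $\Psi:X\to C(Q)$ built with a second cutoff supported in $W$ and equal to $1$ on $\overline U$; the target of $F$ becomes $C(Q)\times C(P)$ with dimension at most $\Widim_{\varepsilon/4}(Y,d_{I_m})+D+2$, and the extra additive constant still vanishes after dividing by $m^k$ and letting $m\to\infty$.
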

\begin{proof}
This proposition is a corollary of Theorem \ref{theorem: pre-fiber theorem}.
If $Y=X$, then the statement is trivial. Hence we suppose $Y\neq X$.
We define $X/Y$ by
\[ X/Y := X/\sim \quad \text{where $y_1\sim y_2$ for all $y_1, y_2\in Y$}.\]
We give the quotient topology to $X/Y$.
It is easy to see that $X/Y$ becomes a second countable compact Hausdorff space.
Hence we can give a distance $d(\cdot, \cdot)$ to $X/Y$ (by Urysohn's theorem).
In addition $X/Y$ is finite dimensional. In fact
\[ X/Y = \bigcup_{n\geq 1}\{x\in X/Y|\, d(x, [Y]) \geq 1/n\} \cup \{[Y]\} \]
where $[Y]$ is the point in $X/Y$ corresponding to $Y\subset X$.
The set $\{d(x, [Y])\geq 1/n\}$ is homeomorphic to a closed subset in $Y^c$, 
and hence its dimension is $\leq \dim(Y^c)$.
Thus 
\[ \dim(X/Y) = \max_{n\geq 1}(\dim \{d(x, [Y])\geq 1/n\} , \dim\{[Y]\}) \leq \dim (Y^c).\]

Since $Y$ is $\mathbb{Z}^k$-invariant, $\mathbb{Z}^k$ naturally acts on $X/Y$ and the projection 
$\pi: X\to X/Y$ becomes $\mathbb{Z}^k$-equivariant. The finite dimensionality of $X/Y$ implies
$\dim(X/Y:\mathbb{Z}^k) =0$.
$\pi|_{Y^c}$ is injective. 
Then we can apply Theorem \ref{theorem: pre-fiber theorem} and get
\[\dim(X:\mathbb{Z}^k) \leq \dim(X/Y:\mathbb{Z}^k) + \dim(Y:\mathbb{Z}^k) = \dim(Y:\mathbb{Z}^k).\]
On the other hand the reverse inequality $\dim(Y:\mathbb{Z}^k) \leq \dim(X:\mathbb{Z}^k)$ is trivial.
\end{proof}
Let $\underline{X}, \underline{Y}$ be compact metric spaces of finite covering dimension and set
$X:=\underline{X}^{\mathbb{Z}^k}, Y:= \underline{Y}^{\mathbb{Z}^k}$.
The additive group $\mathbb{Z}^k$ acts on $X$ and $Y$ as in Example \ref{example: fundamental example of mean dimension}.
Let $f: \underline{X}\to \underline{Y}$ be a continuous map. 
We define a $\mathbb{Z}^k$-equivariant continuous map $F:X\to Y$ by
\[ F: (x_a)_{a\in \mathbb{Z}^k} \mapsto (f(x_a))_{a\in \mathbb{Z}^k} .\]
Let $\Delta \subset Y$ be the diagonal ($\Delta \cong \underline{Y}$), and set $Z:= F^{-1}(\Delta) \subset X$.
(This is an easy example of ``subshifts of finite type" in Gromov \cite[p. 324]{Gromov}.)
The following will be used in the proof of Theorem \ref{theorem: holomorphic 1-forms and mean dimension}.
\begin{proposition}\label{proposition: mean dimension of subshift}
\[ \dim(Z:\mathbb{Z}^k) \leq \sup_{\underline{y}\in\underline{Y}}\, (\dim f^{-1}(\underline{y})) .\]
\end{proposition}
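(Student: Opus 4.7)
My plan is to adapt the proof strategy of Example \ref{example: fundamental example of mean dimension}, which bounds the mean dimension of the full shift $\underline{X}^{\mathbb{Z}^k}$ by $\dim \underline{X}$ via a discretization to a slightly enlarged window $J \supset I_n$. The new ingredient needed is an upper bound on the covering dimension of the image $Z_J$ of $Z$ in $\underline{X}^J$, and I would obtain this by applying Hurewicz's dimension theorem to the natural $f$-projection $Z_J \to \underline{Y}$.

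To set up, I would put $N := \sup_{\underline{y} \in \underline{Y}} \dim f^{-1}(\underline{y})$ (the case $N = \infty$ being trivial), fix $\varepsilon > 0$, and introduce an integer parameter $s$ to be chosen later. For each $n$, let $J := (-s, n+s)^k \cap \mathbb{Z}^k$ and let $\pi_J : Z \to \underline{X}^J$ be the coordinate restriction; its image $Z_J$ is precisely $\bigcup_{\underline{y} \in \underline{Y}} f^{-1}(\underline{y})^J$. The same calculation carried out in the proof of Example \ref{example: fundamental example of mean dimension}, using the geometric decay of the weights in (\ref{eq: definition of distance in X}), yields a constant $C_k$ depending only on $k$ such that for all $z_1, z_2 \in Z$,
\[ d_{I_n}(z_1, z_2) \leq C_k \max_{a \in J} d((z_1)_a, (z_2)_a) + C_k\, 2^{-s}\, \Diam(\underline{X}). \]
Choosing $s$ large enough that the second term is $< \varepsilon / 2$ and then $\delta > 0$ with $C_k \delta < \varepsilon / 2$, any $\delta$-embedding $h : Z_J \to P$ into a finite polyhedron automatically produces an $\varepsilon$-embedding $h \circ \pi_J : (Z, d_{I_n}) \to P$; the key point is that $s$ depends only on $\varepsilon$ and $\Diam(\underline{X})$, not on $n$.

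For the dimension of $Z_J$, I would consider the continuous projection $\pi_Y : Z_J \to \underline{Y}$ sending a constant-$f$ sequence to its common $f$-value. This is a closed map (continuous from compact to Hausdorff) whose fiber over $\underline{y}$ is $f^{-1}(\underline{y})^J$, of covering dimension at most $|J| \cdot \dim f^{-1}(\underline{y}) \leq |J|\, N$. Hurewicz's dimension theorem then gives $\dim Z_J \leq \dim \underline{Y} + |J|\, N$, so for any $\delta > 0$ there exists a $\delta$-embedding of $Z_J$ into a polyhedron of dimension $\leq \dim \underline{Y} + |J|\, N$. Combining the two steps,
\[ \Widim_\varepsilon (Z, d_{I_n}) \leq \dim \underline{Y} + (n + 2s - 1)^k\, N. \]
Dividing by $n^k$ and letting $n \to \infty$ with $s$ fixed yields $\Widim_\varepsilon (Z : \mathbb{Z}^k) \leq N$, and sending $\varepsilon \downarrow 0$ finishes the proof. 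The only point demanding care is the uniformity in $n$ of the metric estimate, which forces $s$ to be chosen independently of $n$; the remaining inputs (Hurewicz's theorem and the inequality $\Widim_\delta \leq \dim$ for compact metric spaces) are standard.
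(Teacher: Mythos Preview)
Your proof is correct and follows essentially the same route as the paper's: both project $Z$ to the window $\underline{X}^J$ with $J=(-s,n+s)^k\cap\mathbb{Z}^k$, use the geometric decay of the metric (\ref{eq: definition of distance in X}) to make this projection an $\varepsilon$-embedding for $s$ large but independent of $n$, and then bound $\dim \pi_J(Z)$ via the map to $\underline{Y}$ and the fiber dimension inequality (the paper writes this as ``topological dimension theory gives $\dim\pi(Z)\leq \dim\underline{Y}+|J|\sup_{\underline{y}}\dim f^{-1}(\underline{y})$''; you name it as Hurewicz's theorem). The only cosmetic difference is that the paper states the fiber inclusion $g^{-1}(\underline{y})\subset (f^{-1}(\underline{y}))^J$ rather than the equality you claim, but your equality is in fact correct since any tuple in $f^{-1}(\underline{y})^J$ extends to an element of $Z$ by repeating one coordinate.
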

\begin{proof}
We will use the same notations as in the proof of Example \ref{example: fundamental example of mean dimension}:
Let $s, n$ be positive integers and set $J:=(-s, n+s)^k \cap \mathbb{Z}^k$.
Let $\pi :X\to \underline{X}^J$ be the natural projection.
For any $\varepsilon >0$, there exists $s = s(\varepsilon , k, \underline{X})$ such that 
\[ \Diam (\pi^{-1}(p), d_{I_n}) < \varepsilon \quad \text{for all $p \in \underline{X}^J$ and any 
$n>0$}.\]
This implies
\[ \Widim_\varepsilon (Z, d_{I_n}) \leq \dim \pi(Z).\]
By the definition of $Z$, there is a (unique) continuous map $g:\pi(Z) \to \underline{Y}$ such that the following diagram becomes commutative:
\begin{equation*}
 \begin{CD}
 Z @>{F}>> \Delta \\
 @V{\pi}VV @V{\cong}VV \\
 \pi(Z) @>{g}>> \underline{Y}
 \end{CD}
\end{equation*}
For each $\underline{y}\in \underline{Y}$, we have $g^{-1}(\underline{y}) \subset (f^{-1}(\underline{y}))^J$.
Then the topological dimension theory gives
\begin{equation*}
 \begin{split}
 \dim \pi (Z) &\leq \dim\underline{Y} + \sup_{\underline{y}\in \underline{Y}}\, 
(\dim g^{-1}(\underline{y})), \\
              &\leq \dim\underline{Y} + |J| \sup_{\underline{y}\in \underline{Y}}\, 
(\dim f^{-1}(\underline{y})).
 \end{split}
\end{equation*}
Therefore
\[ n^{-k} \Widim_\varepsilon (Z, d_{I_n}) \leq 
n^{-k}\dim\underline{Y} + (1+ \frac{2s-1}{n})^k \sup_{\underline{y}\in \underline{Y}}\, (\dim f^{-1}(\underline{y})).\]
Let $n$ go to infinity. Then we get
\[ \Widim_\varepsilon (Z:\mathbb{Z}^k) \leq \sup_{\underline{y}\in \underline{Y}}\, (\dim f^{-1}(\underline{y})).\]
Thus
\[ \dim(Z:\mathbb{Z}^k) \leq \sup_{\underline{y}\in \underline{Y}}\, (\dim f^{-1}(\underline{y})).\]
\end{proof}

%%%%%%%%%%%%%%%%%%%%%%%%%%%%%%%%%%%%%%%%%%%%%%%%%%%%%%%%%%%%%%%%%%%%%%%%%%%%%%%%%%%%%%%%%%%%%%%%%%%%%%%%%%%%%%%%%%%%%%%%%%
%%%%%%%%%%%%%%%%%%%%%%%%%%%%%%%%%%%%%%%%%%%%%%%%%%%%%%%%%%%%%%%%%%%%%%%%%%%%%%%%%%%%%%%%%%%%%%%%%%%%%%%%%%%%%%%%%%%%%%%%%%
%%%%%%%%%%%%%%%%%%%%%%%%%%%%%%%%%%%%%%%%%%%%%%%%%%%%%%%%%%%%%%%%%%%%%%%%%%%%%%%%%%%%%%%%%%%%%%%%%%%%%%%%%%%%%%%%%%%%%%%%%%

\section{Holomorphic 1-forms and mean dimension}
\subsection{Proof of Theorem \ref{theorem: dim(M(X):C) = dim(M(Y):C)}}
Let $X$ be a compact K\"{a}hler manifold\footnote{In the proof of Theorem \ref{theorem: dim(M(X):C) = dim(M(Y):C)}
we don't use the results in Section 2.} and let $\alpha: X\to \Alb(X)$ be the 
Albanese map. Set
\[ Y := \{x\in X|\, \text{$d\alpha_x:T_xX \to T_{\alpha(x)}\Alb(X)$ is not injective}\}.\]
$Y$ is a closed analytic set in $X$.
We define $\moduli (X)$ and $\moduli (Y)$ by 
\begin{equation*}
 \begin{split}
 \moduli (X)  &:= \{f:\affine \to X:\text{holomorphic}|\, |df|\leq 1\}, \\
 \moduli (Y)  &:= \{f\in \moduli (X)|\, f(\affine) \subset Y\} .
 \end{split}
\end{equation*}
Here we define $|df|\geq 0$ by using the K\"{a}hler form $\omega$ on $X$ and 
the equation (\ref{eq: definition of |df|}):
\[ f^*\omega = |df|^2(z) dxdy .\]

We want to prove 
\begin{equation}\label{eq: dim(M(X):C) = dim(M(Y):C)}
\dim(\moduli (X):\affine) = \dim(\moduli (Y):\affine).
\end{equation}
Actually we will prove the following lemma.
\begin{lemma}\label{lemma: key for the proof of dim(M(X):C) = dim(M(Y):C)}
For any $f\in \moduli (X)\setminus \moduli (Y)$, there exists a closed neighborhood 
$K\subset \moduli (X)\setminus \moduli (Y)$ of $f$ such that $\dim K \leq 4\dim_\affine X$.
\end{lemma}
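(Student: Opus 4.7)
The plan is to construct an injective continuous map $\Psi$ from a small closed neighborhood $K$ of $f$ in $\moduli(X)\setminus \moduli(Y)$ into the tangent bundle $TX$. Since $K$ is compact and $TX$ is Hausdorff, $\Psi$ will then be a topological embedding, giving $\dim K\leq \dim TX = 4\dim_\affine X$. I take $\Psi(g) := (g(z_0),\, dg(\partial/\partial z)|_{z_0})$ for a fixed $z_0\in\affine$ with $f(z_0)\notin Y$, which exists because $f\notin \moduli(Y)$.

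The key analytic input is that for every $g\in \moduli(X)$ the composition $\alpha\circ g :\affine \to \Alb(X) = \affine^h/\Gamma$ is an affine map. Indeed, compactness of $X$ makes $|d\alpha|$ globally bounded (with respect to the K\"{a}hler metric on $X$ and the flat metric on $\Alb(X)$), and combined with $|dg|\leq 1$ this bounds $|d(\alpha\circ g)|$ globally on $\affine$. Lifting to the universal cover $\affine^h\to \Alb(X)$ gives an entire map $\widetilde{\alpha\circ g}:\affine\to \affine^h$ whose coordinate derivatives are bounded entire functions, hence constant by Liouville. Thus $\alpha\circ g$ is affine and is completely determined by its value $\alpha(g(z_0))$ together with its derivative $d\alpha_{g(z_0)}(dg(\partial/\partial z)|_{z_0})$ at $z_0$.

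Using the constant rank theorem at $f(z_0)$ (where $d\alpha$ is injective), I choose an open $V\subset X\setminus Y$ containing $f(z_0)$ on which $\alpha|_V$ is a holomorphic embedding. By compact-open continuity of evaluation I then shrink a closed neighborhood $K$ of $f$ inside the open set $\moduli(X)\setminus\moduli(Y)$ and fix $\varepsilon>0$ so that $g(z)\in V$ for all $g\in K$ and $|z-z_0|\leq \varepsilon$. Continuity of $\Psi$ is automatic, since compact-open convergence of holomorphic maps entails convergence of first derivatives at any fixed point (Cauchy integral formula, working in local coordinates on $X$). For injectivity, $\Psi(g_1)=\Psi(g_2)$ gives $g_1(z_0)=g_2(z_0)$ and $dg_1(\partial/\partial z)|_{z_0} = dg_2(\partial/\partial z)|_{z_0}$, so by the previous paragraph $\alpha\circ g_1\equiv\alpha\circ g_2$ on $\affine$; on the disk $|z-z_0|\leq \varepsilon$ both curves take values in $V$ where $\alpha$ is injective, so $g_1 = g_2$ there, and the identity theorem for holomorphic maps forces $g_1\equiv g_2$.

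The main obstacle I expect is the affine-lift step: one must verify the uniform bound on $|d\alpha|$ and handle the identification of $T_{\alpha(g(z_0))}\Alb(X)$ with $\affine^h$ via the flat trivialization so that the data ``value plus derivative at $z_0$'' genuinely pins down the affine map on the quotient $\affine^h/\Gamma$. After this, everything is essentially formal: $\Psi$ is a continuous injection from a compact metric space $K$ into the metrizable manifold $TX$, hence a homeomorphism onto a closed subset, and the bound $\dim K \leq \dim TX = 4\dim_\affine X$ follows from monotonicity of covering dimension.
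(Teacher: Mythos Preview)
Your proposal is correct and follows essentially the same route as the paper: define the map $K\to TX$, $g\mapsto dg(\partial/\partial z)|_{z_0}$, use that $\alpha\circ g$ is affine (the paper isolates this as a separate lemma via Liouville, just as you do) together with local injectivity of $\alpha$ off $Y$ to get injectivity, then conclude by compactness. The only cosmetic difference is that you fix $V$ and $\varepsilon$ uniformly in advance, whereas the paper simply takes $K=\{g:g(0)\in D\}$ for a compact $D\subset X\setminus Y$ and chooses the local chart of $\alpha$ a posteriori at the common point $g_1(0)=g_2(0)$.
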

If this lemma is proved, we can prove (\ref{eq: dim(M(X):C) = dim(M(Y):C)}) as follows;
Since $\moduli (X)\setminus \moduli (Y)$ is $\sigma$-compact (i.e. a union of countable compact sets),
$\moduli (X)\setminus \moduli (Y)$ becomes a union of countable closed sets of dimension $\leq 4\dim_\affine X$:
\[ \moduli (X)\setminus \moduli (Y) = \bigcup_{n\geq 1} K_n, \quad \text{$K_n$: closed and $\dim K_n \leq 4\dim_\affine X$}.\]
Hence
\[ \dim(\moduli (X) \setminus \moduli (Y)) = \sup_{n\geq 1}\, (\dim K_n) \leq 4\dim_\affine X .\]
Then we can apply Proposition \ref{proposition: dim(X:Z^k) = dim(Y:Z^k)} and get
\[ \dim(\moduli (X):\affine) = \dim(\moduli (X):\mathbb{Z}^2) = \dim(\moduli (Y):\mathbb{Z}^2) = \dim(\moduli (Y):\affine).\]
The proof of Lemma \ref{lemma: key for the proof of dim(M(X):C) = dim(M(Y):C)} uses the following obvious fact:
\begin{lemma} \label{lemma: brody curves in complex torus}
Let $T=\affine^h/\Gamma$ be a complex torus with a Hermitian metric.
Let $f:\affine \to T$ be a holomorphic curve satisfying 
\[ \norm{df} := \sup_{z\in \affine}|df|(z) <\infty .\]
(Here we define $|df|(z)$ by the equation (\ref{eq: definition of |df|}).)
Then $f$ can be expressed by 
\[ f(z) = [Az+B] \quad \text{where $A, B \in \affine^h$} .\]
In particular, let $f, g:\affine \to T$ be holomorphic curves satisfying 
$\norm{df}, \norm{dg} <\infty$. 
If $df(\partial /\partial z)|_{z=0} = dg (\partial /\partial z)|_{z=0}$ (in particular $f(0) =g(0)$), then 
$f\equiv g$.
\end{lemma}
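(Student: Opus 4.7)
\medskip

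The plan is to reduce to Liouville's theorem by lifting to the universal cover. Since $\mathbb{C}$ is simply connected, the holomorphic map $f: \affine \to T = \affine^h/\Gamma$ admits a holomorphic lift $\tilde f : \affine \to \affine^h$, with $\tilde f = (\tilde f_1, \dots, \tilde f_h)$ entire. The projection $\affine^h \to T$ is a local isometry once we fix a flat Hermitian metric on $\affine^h$ compatible with the given Hermitian metric on $T$, so $|d\tilde f|(z) = |df|(z)$ pointwise. The assumption $\norm{df} < \infty$ therefore gives a uniform bound on the Euclidean norm of $d\tilde f(\partial/\partial z)|_z = (\tilde f_1'(z), \dots, \tilde f_h'(z))$ on all of $\affine$.

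Next I would apply Liouville's theorem componentwise: each $\tilde f_j'$ is an entire function which is bounded (up to a fixed constant coming from the chosen identification of metrics), hence constant. Integrating, $\tilde f_j(z) = A_j z + B_j$ for some $A_j, B_j \in \affine$, and projecting back yields $f(z) = [Az+B]$ with $A = (A_j), B = (B_j) \in \affine^h$, proving the first assertion.

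For the uniqueness statement, apply the first part to both $f$ and $g$ to write $f(z) = [Az+B]$ and $g(z) = [A'z+B']$. The hypothesis $df(\partial/\partial z)|_{z=0} = dg(\partial/\partial z)|_{z=0}$ is an equality of tangent vectors, which forces $f(0) = g(0)$ (the two tangent spaces coincide only at a common base point) and hence $B \equiv B' \pmod{\Gamma}$; after adjusting the chosen lifts we may assume $B = B'$. Comparing the tangent vectors at $z=0$ under the identification $T_{f(0)}T \cong \affine^h$ then gives $A = A'$, so $f \equiv g$.

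The argument is essentially a one-line application of Liouville once the lift is in hand; the only small point of care is verifying that the pointwise norm defined through the K\"ahler form (\ref{eq: definition of |df|}) on $T$ coincides, under the local isometry, with the standard norm of $\tilde f'(z)$ in $\affine^h$, so that the uniform bound transfers from $f$ to $\tilde f$. This is immediate from the definition of a Hermitian metric on a torus as the descent of a translation-invariant Hermitian form on $\affine^h$, so there is no genuine obstacle.
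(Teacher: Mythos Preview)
Your argument is correct and is exactly the standard one; in fact the paper gives no proof at all, introducing the lemma only as ``the following obvious fact''. One small point: the given Hermitian metric on $T$ need not be translation-invariant, so the covering map $\affine^h \to T$ is not literally a local isometry for an arbitrary Hermitian metric. The fix is immediate: since $T$ is compact, any Hermitian metric is uniformly comparable to the flat one, so the hypothesis $\norm{df}<\infty$ transfers to the flat metric, after which your Liouville argument goes through verbatim.
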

\begin{proof}[Proof of Lemma \ref{lemma: key for the proof of dim(M(X):C) = dim(M(Y):C)}]
We can suppose $Y \not =X$.
Since $f\not \in \moduli (Y)$, we have $f(\affine )\not \subset Y$. 
We suppose $f(0) \not\in Y$ for simplicity. 
Let $D \subset X\setminus Y$ be a compact neighborhood of $f(0)$. 
We define a closed neighborhood $K$ of $f$ by
\[ K:= \{g\in\moduli (X)|\, g(0) \in D \} \subset \moduli (X)\setminus \moduli (Y) .\]
$K$ is closed in $\moduli (X)$ (hence $K$ is compact), and $f$ is an interior point of $K$.
Consider the following continuous map:
\[ S: K\to TX, \quad g\mapsto dg(\partial /\partial z)|_{z=0} .\]
$S$ is injective; if $S(g_1) = S(g_2)$, 
then we have $d(\alpha\circ g_1)(\partial /\partial z)|_{z=0} = d(\alpha\circ g_2)(\partial /\partial z)|_{z=0}$
(here $\alpha:X\to \Alb(X)$ is the Albanese map). 
Lemma \ref{lemma: brody curves in complex torus} implies $\alpha\circ g_1 \equiv \alpha\circ g_2$.
The Albanese map $\alpha$ is a local embedding in a neighborhood of $g_1(0)=g_2(0) \in X\setminus Y$.
Therefore $g_1(z) = g_2(z)$ if $|z| \ll 1$. 
From the unique continuation principle, we have $g_1\equiv g_2$.
Hence $S$ is injective. Since $K$ is compact, $S$ is a homeomorphism from $K$ to $S(K) \subset TX$.
Thus 
\[\dim K = \dim S(K) \leq \dim TX = 4\dim_\affine X .\]
\end{proof}

%%%%%%%%%%%%%%%%%%%%%%%%%%%%%%%%%%%%%%%%%%%%%%%%%%%%%%%%%%%%%%%%%%%%%%%%%%%%%%%%%%%%%%%%%%%%%%%%%%%%%%%%%%%%%%%%%%%%
%%%%%%%%%%%%%%%%%%%%%%%%%%%%%%%%%%%%%%%%%%%%%%%%%%%%%%%%%%%%%%%%%%%%%%%%%%%%%%%%%%%%%%%%%%%%%%%%%%%%%%%%%%%%%%%%%%%%

\subsection{Proof of Theorem \ref{theorem: holomorphic 1-forms and mean dimension}}
The proof of Theorem \ref{theorem: holomorphic 1-forms and mean dimension} is based on the following fact:
A bounded holomorphic 1-form on the complex plane $\affine$ is of the form 
\[ a dz \quad \text{where $a$ is a constant}. \] 

Let $X$ be a smooth, connected projective variety, and let $\omega_1, \cdots, \omega_h$ be 
a basis of $H^{1,0}$ ($h= \dim_\affine H^{1,0}$). 
Let $d\alpha:TX\to \affine^h$ be the derivative of the Albanese map $\alpha$:
\[ d\alpha :TX\to \affine^h , \quad v\mapsto (\omega_1(v), \cdots, \omega_h(v) ).\]
Let $BX$ be the ball bundle:
\[ BX:= \{v\in TX|\, |v|\leq 1 \} .\]
Let $D := \{u\in \affine^h|\, |u|\leq R\}$ be the ball of radius $R$ in $\affine^h$. 
Here we take $R$ sufficiently large so that $d\alpha (BX) \subset D$.

Consider a lattice $\Lambda \subset \affine$ satisfying 
\[ e(X) < \frac{1}{|\affine/\Lambda|} .\]
Then Lemma \ref{lemma: discretization 2} implies that the following discretization map $S$ is a topological 
embedding:
\[S: \moduli (X) \to BX^\Lambda ,
\quad f\mapsto (df(\partial /\partial z)|_{z=\lambda})_{\lambda\in \Lambda} .\]
(Note that $|df(\partial /\partial z)| = |df|(z)/\sqrt{2} \leq 1/\sqrt{2} <1$.)
The map $d\alpha|_{BX}:BX\to D$ defines 
\[ A: BX^\Lambda \to D^\Lambda , 
\quad (u_{\lambda})_{\lambda\in \Lambda} \mapsto (d\alpha(u_\lambda))_{\lambda\in \Lambda}.\]
Let $\Delta\subset D^\Lambda$ be the diagonal. 
Then Proposition \ref{proposition: mean dimension of subshift} shows
\[ \dim(A^{-1}(\Delta): \Lambda) \leq \sup_{u\in D} \dim\{(d\alpha)^{-1}(u)\cap BX\} 
\leq 2\sup_{u\in \affine^h} \dim_\affine d\alpha^{-1}(u) .\]

For any $f \in \moduli (X)$, $f^*\omega_i$ is a bounded holomorphic 1-form on $\affine$.
Hence it is of the form $adz$ ($a$ is a constant depending on $f$ and $\omega_i$).
This means that $S(\moduli (X))$ is contained in $A^{-1}(\Delta)$. 
Therefore 
\[ \dim(\moduli (X):\Lambda) = \dim(S(\moduli (X)):\Lambda) \leq \dim(A^{-1}(\Delta): \Lambda) \leq
2\sup_{u\in \affine^h} \dim_\affine d\alpha^{-1}(u) .\]
From this, we have
\[ \dim(\moduli (X):\affine) = \frac{\dim(\moduli (X):\Lambda)}{|\affine /\Lambda|} 
\leq \frac{2}{|\affine /\Lambda|}  \sup_{u\in \affine^h} \dim_\affine d\alpha^{-1}(u) .\]
We can take $1/|\affine /\Lambda|$ arbitrarily close to $e(X)$. Thus we conclude that 
\[ \dim(\moduli (X):\affine) \leq 2e(X) \sup_{u\in \affine^h} \dim_\affine d\alpha^{-1}(u) .\]

%%%%%%%%%%%%%%%%%%%%%%%%%%%%%%%%%%%%%%%%%%%%%%%%%%%%%%%%%%%%%%%%%%%%%%%%%%%%%%%%%%%%%%%%%%%%%%%%%%%%%%%%%%%
%%%%%%%%%%%%%%%%%%%%%%%%%%%%%%%%%%%%%%%%%%%%%%%%%%%%%%%%%%%%%%%%%%%%%%%%%%%%%%%%%%%%%%%%%%%%%%%%%%%%%%%%%%%
%%%%%%%%%%%%%%%%%%%%%%%%%%%%%%%%%%%%%%%%%%%%%%%%%%%%%%%%%%%%%%%%%%%%%%%%%%%%%%%%%%%%%%%%%%%%%%%%%%%%%%%%%%%

\vspace{10mm}

\address{ Masaki Tsukamoto \endgraf
Department of Mathematics, Faculty of Science \endgraf
Kyoto University \endgraf
Kyoto 606-8502 \endgraf
Japan
}

\textit{E-mail address}: \texttt{tukamoto@math.kyoto-u.ac.jp}

\end{document}